\newtheorem{thm}{Theorem}
\newtheorem{lem}{Lemma}
\newtheorem{prop}{Proposition}
\newtheorem{cor}{Corollary}
\newtheorem{fact}{Fact}
\DeclareMathOperator*{\argmin}{argmin}
\pgfplotsset{compat=newest}
\tikzstyle{line} = [ draw, -latex']
\newcommand{\cB}{\mathcal{B}}
\newcommand{\cL}{\mathcal{L}}
\newcommand{\cG}{\mathcal{G}}
\newcommand{\cS}{\mathcal{S}}
\DeclareMathOperator*{\conv}{conv}
\DeclareMathOperator*{\extr}{extr}
\title{
Node-based valid inequalities for \\ the optimal transmission switching problem
}
\author{
Santanu S. Dey\thanks{santanu.dey@isye.gatech.edu, H. Milton  Stewart School of Industrial \& Systems Engineering, Georgia Institute of Technology, Atlanta, GA 30332.}, 
Burak Kocuk\thanks{burak.kocuk@sabanciuniv.edu, Industrial Engineering Program,  Sabanc{\i} University, Istanbul, Turkey 34956.},  
Nicole Redder\thanks{nredder@gatech.edu, H. Milton  Stewart School of Industrial \& Systems Engineering, Georgia Institute of Technology, Atlanta, GA 30332.}
}
\begin{document}

\maketitle

\vspace{-1cm}
\begin{abstract}
The benefits of transmission line switching are well-known in terms of reducing operational cost and improving system reliability of power systems. However, finding the optimal power network configuration is a challenging task due to the combinatorial nature of the underlying optimization problem. In this work, we identify a certain ``node-based'' set that appears as substructure of the optimal transmission switching problem and then conduct a polyhedral study of this set.
We construct an extended formulation of the integer hull of this set and present the inequality description of the integer hull in the original space in some cases. These inequalities in the original space can be used as cutting-planes for the transmission line switching problem. 
Finally, we present the results of our computational experiments using these cutting-planes on difficult test cases from the literature.
\end{abstract}

\section{Introduction}
The study of short-term electric power systems planning offers a wide range of interesting optimization problems for the operations research community \cite{kocuk2016global}. One of the fundamental optimization problems from this area is called the alternating current optimal power flow (AC-OPF) problem.
The main goal in this problem is to determine a minimum cost flow across the power network that satisfies demand while following physics laws such as the Ohm's law and Kirchhoff's law. However, the AC-OPF problem is very challenging to solve since the power flow constraints are nonlinear and nonconvex, the underlying networks are very large, and finally AC-OPF needs to be solved rapidly for changing power demands. Therefore, the constraints are typically approximated by direct current (DC) power equations, a linearization of the AC equations, as in \cite{Fisher}, \cite{Barrows12}, \cite{Fuller12},  \cite{Wu13}, and the resulting model is called as the DC-OPF problem. 

Because of the underlying physics of electricity, removing lines from a network may result in improved network efficiency. It is therefore possible that switching off lines may reduce the generation cost \cite{oneill.et.al:05}. In this way, we arrive at the optimal transmission switching (OTS) problem, formalized by Fisher et al. \cite{Fisher}. The OTS problem is the AC-OPF problem modified by the additional choice of switching lines off. With the DC approximation DC-OPF, OTS can be modeled as a mixed-integer linear program (MILP), in which binary variables model the switching status of the network lines; and power flow constraints are only applied to active lines.  Henceforth, in the paper we refer to the DC version of optimal switching problem as DC-OTS. 

DC-OTS is NP-hard to solve~\cite{lehmann2014complexity,kocuk2016cycle}. Since the linear relaxations are often weak (involving many big-M constraints), DC-OTS model with many binary variables are often very hard for modern computational integer programming software. To the best of our knowledge the only paper that conducts a polyhedral study of a substructure of DC-OTS in order to improve the quality of linear programming relaxation is~\cite{kocuk2016cycle}. We emphasize that finding high quality feasible solutions is also quite difficult (see, for example, a large number of heuristics proposed for this problem~\cite{hedman2010smart,liu2012heuristic, ruiz2012tractable,fuller2012fast,johnson2020k}). Therefore, the DC-OTS problem is a very challenging problem to solve exactly. 

In this paper, we identify a certain ``node-based'' set that appears as substructure of the DC-OTS problem and then conduct a polyhedral study of this set. We construct an extended formulation of the integer hull of this set. Then, we present the inequality description of the integer hull in the original space in some special (but commonly occurring in practice) cases. These inequalities in the original space can be used as cutting-planes for the DC-OTS problem. Finally, we present the results of our computational experiments using these cutting-planes on difficult test cases from the literature.


The rest of the paper is organized as follows: In Section~\ref{sec:ots} we formally introduce the MILP formulation of the DC-OTS problem. In Section~\ref{sec:substructure}, we identify the ``node-based'' set that appears as substructure of the DC-OTS problem and show that optimizing on this set is NP-hard. Then in Section~\ref{sec:ext}, we present an extended formulation of the integer hull of the ``node-based'' set and identify conditions under which this extended formulation is compact. Since the extended formulation is exponential in size in general, we present a compact outer approximation of the integer hull in Section~\ref{sec:app}. We then show that the outer approximation yields the convex hull in some special cases. In Section~\ref{sec:com}, we present out computational results. Finally, we make some concluding remarks in Section~\ref{sec:conc}.

\textbf{Notation}: Given a set $S$, we denote its convex hull, (affine) dimension, 
set of extreme points
as $\conv(S)$, $\dim(S)$,  
$\extr(S)$, 
respectively. We will use   $[m:n]$ as a shorthand notation for the set  $\{m, m+1, \dots,n\}$, where $m,n\in \mathbb{Z}$\textcolor{black}{, the set of integers}.

\section{DC optimal transmission switching problem}\label{sec:ots}

A power network is a set of nodes called {\it buses} and edges called {\it transmission lines} within a power system.  
Consider a power network $\mathcal{N} = (\mathcal{B},\mathcal{L})$, where $\mathcal{B}$ represents the node set and $\mathcal{L}$ represents the edge set. The subset of buses $\mathcal{G}\subseteq \mathcal{B}$ represents buses connected to generation units (power generators), and we assume that every bus holds some electric demand, called load.
The goal of DC-OTS is to satisfy demand at all buses with the minimum total production costs of generators such that the solution obeys the physical laws (e.g., Ohm's Law) and other operational restrictions (e.g. transmission line flow limit constraints).

We now describe the parameters of the DC-OTS problem:

\begin{itemize}
\item
For each bus $i \in \cB$, let $p_i^d$ and be the real power load.

\item
For each generator located at bus $i \in \cG$, active output is restricted to be in the interval $ [p_i^{\text{min}}  , p_i^{\text{max}}]$. We set $ p_i^{\text{min}} =  p_i^{\text{max}}=0$ for $i \in \mathcal{B} \setminus \mathcal{G}$.

\item For each transmission line $(i,j)\in\cL$, the susceptance  is denoted as $B_{ij}$ and  the thermal limit is denoted as $\overline s_{ij}$.

\end{itemize}

We  define the following decision variables to model the DC-OTS problem:
\begin{itemize}
\item
For each bus $i \in \cB$, let $\theta_i$ and be the phase angle.
\item
For each generator located at bus $i \in \cG$, let $p_i^g$ be its production output.
\item
For each transmission line $(i,j) \in \cL$, let $p_{ij}$ be its real power flow and $x_{ij}$ be its on/off status.
\end{itemize}
The DC-OTS problem is modeled as follows:
\begin{subequations} \label{Angle FormulationDC}
\begin{align}
  \min  &\hspace{0.5em}  \sum_{i \in \mathcal{G}} C_i(p_i^g)  \label{objDC} \\
  \mathrm{s.t.}   &\hspace{0.5em}  p_i^g-p_i^d = \sum_{j \in \delta(i) } p_{ij}    &i& \in \mathcal{B}   \label{activeAtBusDC} \\
 &  \hspace{0.5em}   p_{ij}=[B_{ij}(\theta_i - \theta_j) ]x_{ij}     &(&i,j) \in \mathcal{L} \label{powerOnArcDC} \\
  & \hspace{0.5em}  | p_{ij} | \le \overline s_{ij} x_{ij}    &(&i,j) \in \mathcal{L} \label{powerLimitArcDC} \\
\notag  & \hspace{0.5em}  p_i^{\text{min}}  \le p_i^g \le p_i^{\text{max}}    & i& \in \mathcal{B} \\
\notag  & \hspace{0.5em}  x_{ij}  \in\{ 0,1\} &(&i,j) \in \mathcal{L}
\end{align}
\end{subequations}
Here, objective function  \eqref{objDC} minimizes the total cost of production, where $C_i$ is a linear or convex quadratic cost function of generator $i$. Constraint \eqref{activeAtBusDC} enforces flow conservation at each bus $i$. Real power across line $(i,j)$, namely $p_{ij}$, is expressed in terms of the phase angles at buses $i$ and $j$ through constraint \eqref{powerOnArcDC}, 
which is an approximation of the true physical function. Finally, the flow limit on line $(i,j)$ is imposed by constraint  \eqref{powerLimitArcDC}. 

Note that constraint \eqref{powerOnArcDC} can be linearized using a big-M formulation to obtain a MILP formulation of DC-OTS as follows:
\begin{eqnarray*}
p_{ij} \leq [B_{ij}(\theta_i - \theta_j) ]   + 2\pi B_{ij}(1 - x_{ij})    \ (i,j) \in \mathcal{L} \\
 p_{ij} \geq [B_{ij}(\theta_i - \theta_j) ]   - 2\pi B_{ij}(1 - x_{ij})    \ (i,j) \in \mathcal{L}.
\end{eqnarray*}
Here, we use an upper bound of $2\pi$ on $\theta_i - \theta_j$.
%


\section{A ``node-based” substructure of DC-OTS}\label{sec:substructure}

In this section, we start our polyhedral study of a ``node-based” substructure of DC-OTS. 
Let $ \bar f_j \in \mathbb{R}_+$, $\underline f_j=-\bar f_j$, for $j\in[1:n]$, $\underline f_0 \le \bar f_0$ and $d \in \mathbb{R}$. For convenience, let  $x_0=1$.
Consider the set
\begin{equation}\label{eq:DCSubstruct}
\mathcal{S} = \bigg \{ (x,f) \in \{0,1\}^n \times \mathbb{R}^{n+1}: \sum_{j=0}^n f_j = d, \ \underline f_j x_j \le f_j \le \bar f_j x_j \ j \in [0:n]  \bigg \}.
\end{equation}
This mixed-integer set appears as a substructure in the DC-OTS problem as follows: Consider a bus in the power network with a generator\footnote{A bus without a generator can be simply modeled by setting $\underline f_0=\overline f_0=0$.} and $n$ incident transmission lines.  Let $d$ represent the load at node $0$, $f_0$ represent the dispatch variable with bounds $\underline f_0$ and $\overline f_0$,  $f_j$ represent the flow variable of  line $j$ with  a bound of $\bar f_j$, and $x_j$ represent the on/off status of line $j$, $j\in[1:n]$. See Figure~\ref{fig:illustration4} for an illustration.

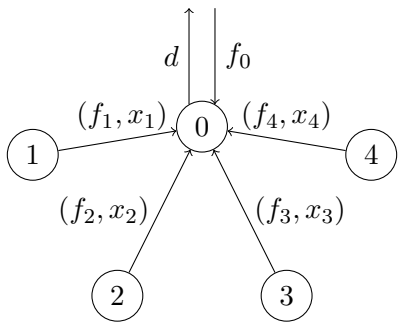
\begin{figure}[H]
\begin{center}
\begin{tikzpicture}[scale=0.75]
    \node[shape=circle,draw=black, label = { }] (A) at (0,0) {0};
    \node[shape=circle,draw=black] (B) at (-3,-0.5) {1};
    \node[shape=circle,draw=black] (C) at (-1.5,-3) {2};
    \node[shape=circle,draw=black] (D) at (1.5,-3) {3};
    \node[shape=circle,draw=black] (E) at (3,-0.5) {4};
    \node[shape=circle,draw=white] (dummy) at (0,2.5) {\color{white}{5}};

    \path [->] (B) edge node [align=center,above]{{ }$(f_1,
    {x_1})$} (A);
    \path [->] (C) edge node [align=center,left]{$(f_2, {x_2})$} (A);
    \path [->] (D) edge node [align=center,right]{$(f_3,{x_3})$} (A);
    \path [->] (E) edge node [align=center,above]{$(f_4, {x_4})$} (A);
    


\draw
(dummy.-60) edge[auto=left,->] node {$f_0$} (A.60) 
(A.120) edge[auto=left,->] node {$d$} (dummy.240)  ; ;

\end{tikzpicture}
\end{center}
\caption{Illustration with $n=4$.}\label{fig:illustration4}
\end{figure}

Our aim is to understand the complexity of optimizing a linear function over the set $\mathcal{S}$ and \textcolor{black}{to} study its  polyhedral properties (e.g. obtain its convex hull or a tight polyhedral  outer-approximation).

%
%
%

\begin{prop}\label{prop:DCSubstructCompl}
Optimizing a linear function over the set $\mathcal{S}$ defined in \eqref{eq:DCSubstruct} is NP-Hard.
\end{prop}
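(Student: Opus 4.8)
The plan is to exhibit a polynomial-time reduction from a known NP-hard problem to the optimization of a linear function over $\mathcal{S}$. The set $\mathcal{S}$ consists of binary switching variables $x_j$ together with continuous flow variables $f_j$ that are forced to zero whenever the corresponding line is switched off, subject to a single equality constraint $\sum_{j=0}^n f_j = d$. Since the continuous variables are completely determined in their feasible ranges by the binary choices, the essential combinatorial difficulty lies in selecting a subset of lines whose flow capacities can be combined to meet the demand $d$. This strongly suggests a reduction from \textsc{Subset-Sum} (equivalently \textsc{Partition}), where the question of whether some subset of given numbers sums to a prescribed target is NP-complete.

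First I would take an instance of \textsc{Subset-Sum} consisting of positive integers $a_1, \dots, a_n$ and a target $t$. The idea is to encode each $a_j$ as the capacity $\bar f_j = a_j$ of line $j$ (with $\underline f_j = -a_j$), and to choose the demand $d$ and the dispatch bounds $\underline f_0, \overline f_0$ so that the single flow-balance equation $\sum_{j=0}^n f_j = d$ together with the bound constraints $\underline f_j x_j \le f_j \le \bar f_j x_j$ can be satisfied exactly when a subset summing to $t$ exists. The cleanest approach is to fix $f_0$ to a single value by setting $\underline f_0 = \overline f_0$ (recalling $x_0 = 1$), so that the balance equation reduces to a condition purely on $\sum_{j=1}^n f_j$. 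Then I would design the linear objective to reward switching on lines in a way that forces the flows $f_j$ to hit their extreme values $\pm a_j$ rather than intermediate values, effectively converting the continuous feasibility question into the discrete subset-selection question.

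The key technical steps, in order, are: (i) state the reduction explicitly, specifying $\bar f_j$, $\underline f_j$, $d$, $\underline f_0$, $\overline f_0$, and the objective coefficients as functions of the \textsc{Subset-Sum} data; (ii) argue completeness, i.e. that a ``yes'' instance of \textsc{Subset-Sum} yields a feasible point of $\mathcal{S}$ achieving a designated objective value; and (iii) argue soundness, i.e. that any feasible point achieving that objective value must have each $x_j \in \{0,1\}$ and each $f_j$ at an endpoint, thereby recovering a subset of the $a_j$ summing to the target. Verifying that the reduction is polynomial in size is immediate since all parameters are copied or simply derived from the instance data.

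The main obstacle I anticipate is ensuring soundness, i.e. ruling out ``fractional'' flow solutions that satisfy the balance equation by using intermediate values of some $f_j \in (0, \bar f_j)$ without corresponding to an integral subset choice. Because the continuous flows can take any value in their intervals, the equality constraint alone does not force the flows to their endpoints; a subset summing to $t$ is sufficient for feasibility but the converse requires care. I would address this either by choosing the objective so that at optimality each active flow is pushed to its capacity $\bar f_j$ (making the on/off decision the only degree of freedom), or by a slightly more careful gadget in which the demand and bounds are rigged so that the balance equation can be met if and only if an exact subset sum is achieved. Resolving this tension between the continuous relaxation of the flows and the discrete nature of \textsc{Subset-Sum} is the crux of the argument.
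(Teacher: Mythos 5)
Your proposal is correct and follows essentially the same route as the paper: a reduction from \textsc{Subset-Sum} with $\overline f_j = -\underline f_j = a_j$, $d$ equal to the target, $f_0$ pinned by $\underline f_0 = \overline f_0$, and a linear objective that forces each active flow to its capacity. The paper realizes your step (iii) with the objective $\sum_j a_j x_j - \sum_j f_j \le 0$, whose terms $a_j x_j - f_j$ are individually nonnegative by the bound constraints, so the threshold $0$ forces $f_j = a_j x_j$ and resolves exactly the soundness issue you flagged.
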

\begin{proof}
Consider the following decision problem, which we will call $\mathsf{DC-Node}$: Does there exist $(x,f)\in\mathcal{S}$ such that $\alpha^{\top} x + \beta^{\top} f \le \gamma$? We claim that $\mathsf{DC-Node}$ is NP-Complete, from which the statement of the  proposition follows.

We prove the NP-Completeness of  $\mathsf{DC-Node}$ by a reduction from $\mathsf{Subset-Sum}$, which is known to be NP-Complete \cite{Garey}. Consider an instance of $\mathsf{Subset-Sum}$ as follows: Given $a \in \mathbb{Z}^n_{++}$ and $b \in \mathbb{Z}_{++}$, does there exist a subset $J \subseteq [1:n]$ such that $\sum_{j \in J} a_j   = b$?  We construct an instance of $\mathsf{DC-Node}$ as below:
\[
\underline f_0=\overline f_0=0, \  \beta_0 = 0,\ d=b, \ \gamma=0, \ \overline f_i = -\underline f_i = a_i,  \ \alpha_i=a_i, \ \beta_i=-1 \quad i\in[1:n].
\]
We first note that the size of the  $\mathsf{DC-Node}$ instance is polynomial in the size of the  $\mathsf{Subset-Sum}$ instance.
We now verify that $\mathsf{DC-Node}$ is feasible if and only if $\mathsf{Subset-Sum}$ is feasible.

($\Rightarrow$): Let $(\hat x,  \hat f) \in \mathcal{S}$ such that $a^T \hat x - e^T \hat f \le 0$ be a solution to $\mathsf{DC-Node}$. Since $a_j \hat x_j - \hat f_j \ge 0$, we obtain that $\hat f_j = a \hat x_j$ for $j\in[1:n]$. Now, set $\hat J =\{j: \, \hat x_j = 1\}$. Since $\sum_{j\in\hat J} a_j = \sum_{j=1}^n \hat f_j = b$, the set $\hat J$ is a feasible solution to $\mathsf{Subset-Sum}$.

($\Leftarrow$): Let $\hat J \subseteq [1:n]$ be a solution to $\mathsf{Subset-Sum}$. Then, one can construct a feasible solution to $\mathsf{DC-Node}$ as
$
(\hat x_j, \hat f_j)= (1, a_j)
$
if $j \in \hat J$ and
$
(\hat x_j, \hat f_j)= (0,0)
$
otherwise.
\end{proof}

We now present an important sub-family of instances for which the optimization can be performed efficiently, since for this sub-family we will show the existence of a polynomial size extended formulation of the integer hull. The proof of Theorem \ref{prop:fixedKappaPolyTime} is presented in the next section.
\begin{thm}\label{prop:fixedKappaPolyTime}
Let $\bar\kappa$ be a fixed positive integer. Consider the set $\mathcal{S}$ defined in \eqref{eq:DCSubstruct} with $\underline f_0=\overline f_0 = 0$, 
$\bar f_j = \kappa_j \bar f$ where $\kappa_j\in[1:\bar\kappa]$ for $j\in[1:n]$, $\bar f \in \mathbb{R}_{++}$ and $d \in [0, \bar f)$. Then there exists an algorithm for optimizing a linear function over the set $\mathcal{S}$ whose running time is polynomial in the size of the input.
\end{thm}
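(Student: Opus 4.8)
The plan is to reduce the optimization to minimizing a linear function over $\conv(\mathcal{S})$, to characterize the vertices of this polytope, and to show that when $\bar\kappa$ is fixed these vertices are indexed by only polynomially much discrete data; the resulting search then collapses to a dynamic program that simultaneously furnishes the promised compact extended formulation.

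First I would exploit the hypotheses to simplify $\mathcal{S}$: since $\underline f_0 = \bar f_0 = 0$ we have $f_0 = 0$, so the defining constraints reduce to $\sum_{j=1}^n f_j = d$ together with $-\kappa_j \bar f\, x_j \le f_j \le \kappa_j \bar f\, x_j$. Because $\mathcal{S}$ is a finite union of bounded polytopes (one for each support $J = \{j : x_j = 1\}$), its convex hull is a polytope and the minimum is attained at a vertex. I would then describe the vertices: for a fixed support $J$, the feasible flows form a box intersected with the single hyperplane $\sum_{j\in J} f_j = d$, so at a vertex at most one flow lies strictly between its bounds. Hence (for $d>0$) every vertex of $\mathcal{S}$ has exactly one ``pivot'' line $p$ with $f_p \in (-\kappa_p\bar f, \kappa_p\bar f)$, every other on-line saturated at $f_j \in \{-\kappa_j\bar f, +\kappa_j\bar f\}$, and every off-line at $f_j = 0$; the case $d = 0$ additionally allows the all-off point and pivot-free balanced configurations.

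The crucial step, and the only place the fixedness of $\bar\kappa$ is used, is the following counting. Set $H = \sum_{f_j = +\kappa_j\bar f}\kappa_j$ and $L = \sum_{f_j = -\kappa_j\bar f}\kappa_j$ and $m = H - L \in \mathbb{Z}$; the flow balance forces $f_p = d - m\bar f$, and $|f_p| \le \kappa_p\bar f$ together with $d \in [0,\bar f)$ yields $|d/\bar f - m|\le \kappa_p \le \bar\kappa$, so the integer offset $m$ ranges over only $O(\bar\kappa)$ values. Thus a vertex is completely determined by the choice of pivot $p$, the offset $m$, and an assignment of every remaining line to a state in $\{\text{off},+,-\}$; in particular all flow values are pinned down by discrete data.

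Finally, fixing $p$ and $m$, the remaining task is to assign the other lines to $\{\text{off},+,-\}$ so that the signed total $\sum \pm\kappa_j$ equals $m$, minimizing the separable cost that maps $\text{off} \mapsto 0$, $+\mapsto \alpha_j + \beta_j\kappa_j\bar f$, and $-\mapsto \alpha_j - \beta_j\kappa_j\bar f$ (the pivot contributing the constant $\alpha_p + \beta_p(d-m\bar f)$). This is a minimum-cost signed subset-sum with target $m$, which I would solve by a layered dynamic program whose state is the pair consisting of the index of the line being processed and the running signed total, an integer in $[-n\bar\kappa, n\bar\kappa]$; this has $O(n^2\bar\kappa)$ states, and ranging over the $O(n)$ pivots and $O(\bar\kappa)$ offsets keeps everything polynomial. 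The arc--node incidence of this layered graph is exactly a polynomial-size network whose path polytope projects onto $\conv(\mathcal{S})$, which is the compact extended formulation. I expect the main obstacle to be the exact-hull bookkeeping rather than the algorithm itself: proving that the structured points enumerated above are precisely the vertices of $\conv(\mathcal{S})$ (with correct treatment of the degenerate and $d=0$ cases) and that the network formulation is integral and projects exactly onto $\conv(\mathcal{S})$, as opposed to merely yielding a correct optimization oracle.
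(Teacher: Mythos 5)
Your proposal is correct and follows essentially the same route as the paper: the paper's Algorithm 1 builds exactly your layered graph, with node state $(i,\delta,S)$ carrying the running partial sum $\delta$ and the (at most one) pivot coordinate $S$, and its Proposition 4 is precisely your counting argument that, under the stated hypotheses, $\delta$ takes only $O(n\bar\kappa)$ distinct values per layer because every partial sum is an integer multiple of $\bar f$ possibly shifted by $d$. The only cosmetic difference is that you enumerate the pivot and the integer offset outside a direct dynamic program, whereas the paper folds the pivot into the network state and optimizes by linear programming over the resulting totally unimodular path formulation (Propositions 3 and 4).
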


\textcolor{black}{Although the requirements of Theorem~\ref{prop:fixedKappaPolyTime} might seem  restrictive at first glance, we note that it is quite common in a power network to have transmission line thermal limits which are multiples of each other. Therefore, the assumptions of Theorem~\ref{prop:fixedKappaPolyTime} are satisfied for the majority of the substructures that appear in  instances from the literature.  For example, in 118-bus instances from \cite{kocuk2016cycle},  the fraction of substructures with $\bar \kappa = 3$ is about 93\%  while  86\% of the substructures in 300-bus instances can be covered by $\bar \kappa = 7$.}


\section{Proof of Theorem~\ref{prop:DCSubstructCompl} via extended formulation of convex hull of $\mathcal{S}$}\label{sec:ext}

In this section, our aim is to obtain the convex hull of $\mathcal{S}$ as an extended formulation using disjunctive arguments.  
We start our analysis by identifying the extreme points of a certain restricted polyhedral set defined as
\begin{equation}\label{eq:DCSubstructGivenx}
\mathcal{S}(\hat x)= \bigg \{ f \in \mathbb{R}^{n+1} : \sum_{j\in J(\hat x)} f_j = d, \ \underline f_j  \le f_j \le \bar f_j \ j \in J(\hat x), \ f_j = 0 \ j \not\in J(\hat x)  \bigg\},
\end{equation}
where  $\hat x \in\{0,1\}^{n}$  and $J(\hat x) := \{0\}\cup \{j: \hat x_j = 1\}$.

\begin{prop}\label{prop:extrDCSubstructGivenx}
Consider the set $\mathcal{S}(\hat x)$ defined in \eqref{eq:DCSubstructGivenx}. Then, we have
\begin{equation*}
\begin{split}
\extr{(\mathcal{S}(\hat x)}) =  \bigcup_{k \in J(\hat x)} \bigg\{ f \in \mathbb{R}^{n+1} : & \ f_j \in \{\ \underline f_j, \bar f_j\}, \ j\in J(\hat x) \setminus \{k\}, \\ 
& \ f_k = d-\sum_{j\in J(\hat x) \setminus \{k\}} f_j,  \underline f_k  \le f_k \le \bar f_k , \\ 
& \ f_j = 0, \ j \not\in J(\hat x)  \bigg \}.
\end{split}
\end{equation*}
\end{prop}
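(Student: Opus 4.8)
The plan is to recognize $\mathcal{S}(\hat x)$ as a box intersected with a single linear equation, and then characterize its vertices through the elementary ``no nontrivial convex combination'' definition of an extreme point. First I would reduce to the active coordinates: since the constraints $f_j = 0$ for $j \notin J(\hat x)$ pin those coordinates, every point of $\mathcal{S}(\hat x)$ is determined by its restriction to $\mathbb{R}^{J(\hat x)}$, and restricting gives an affine bijection onto the polytope $P = \{ f \in \mathbb{R}^{J(\hat x)} : \sum_{j \in J(\hat x)} f_j = d, \ \underline f_j \le f_j \le \bar f_j \}$. Affine bijections preserve extreme points, so it suffices to show that $f$ is a vertex of $P$ if and only if at most one coordinate $f_k$ lies strictly between its bounds while all remaining coordinates sit at a bound, which is precisely the union on the right-hand side.

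For the forward inclusion (a vertex has at most one strictly-interior coordinate), I would argue by contradiction using a balanced perturbation. Suppose $f$ is a vertex but two distinct indices $p, q \in J(\hat x)$ satisfy $\underline f_p < f_p < \bar f_p$ and $\underline f_q < f_q < \bar f_q$. The direction $e_p - e_q$ (with $e_j$ the standard unit vectors) has zero coordinate sum, so it preserves the equation $\sum_j f_j = d$; and for all sufficiently small $\epsilon > 0$ the points $f \pm \epsilon (e_p - e_q)$ stay inside the box and hence lie in $P$. Since $f$ is the midpoint of these two distinct points, this contradicts extremality. Thus at most one coordinate is strictly interior; calling it $k$ (or choosing an arbitrary $k$ when every coordinate already sits at a bound), the remaining coordinates are at $\underline f_j$ or $\bar f_j$ and the equation forces $f_k = d - \sum_{j \neq k} f_j$, placing $f$ in the claimed set.

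For the reverse inclusion I would take any $f$ of the stated form, with distinguished index $k$, and show it admits no nontrivial convex decomposition. If $f = \tfrac12(g + h)$ with $g, h \in P$, then for each $j \neq k$ the value $f_j$ equals a bound of its interval; since $g_j, h_j \in [\underline f_j, \bar f_j]$ average to that bound, both must equal $f_j$. Hence $g$ and $h$ agree with $f$ on every coordinate except possibly $k$, and the shared equation $\sum_j g_j = \sum_j h_j = d$ then forces $g_k = h_k = f_k$, so $g = h = f$ and $f$ is extreme. The same conclusion follows from the basic-feasible-solution characterization: the equation contributes the row $\mathbf 1^\top$, the $m-1$ tight box constraints (where $m := |J(\hat x)|$) contribute distinct unit vectors $e_j$, and these $m$ rows are linearly independent because $\mathbf 1$ has a nonzero entry in the single coordinate that no $e_j$ touches. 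The only point requiring care, and the mild obstacle here, is the bookkeeping of degenerate vertices that sit at a bound in every coordinate: these are produced by more than one choice of $k$ in the union, but this causes no difficulty for the set equality, since we only need membership of each extreme point in the right-hand side rather than a disjoint parametrization.
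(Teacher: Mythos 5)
Your proof is correct and follows essentially the same route as the paper, which simply invokes the standard fact that a vertex of a polyhedron in $\mathbb{R}^{n+1}$ must have $n+1$ linearly independent active constraints, so that with one equation present at least $n$ bound constraints are tight; your perturbation along $e_p - e_q$ and your midpoint argument are just the elementary unwinding of that same vertex characterization. If anything, you are more careful than the paper: you verify the reverse inclusion (that every point of the stated form is in fact extreme) explicitly, which the paper leaves to the reader.
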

\begin{proof}
Observe that $\mathcal{S}(\hat x)$ is a polyhedral set defined in $\mathbb{R}^{n+1}$. Therefore, at an extreme point, there must exist at least $n+1$ \textcolor{black}{linearly independent} inequalities satisfied at equality~\cite{wolsey1999integer}. Since the set  $\mathcal{S}(\hat x)$ has already one equality in its definition, at least $n$ of the bound constraints must be active. Therefore, enumerating over each coordinate gives the desired result.
\end{proof}
Now, we are ready to give the extreme points of  $\conv(\mathcal{S})$.
\begin{cor}\label{prop:extrDCSubstruct}
Consider the set $\mathcal{S}$ defined in \eqref{eq:DCSubstruct}. Then, we have
\begin{equation*}
\begin{split}
\extr{(\conv(\mathcal{S}))} =  \{ (x, f) \in \{0,1\}^n \times \mathbb{R}^{n+1} : f \in \extr{(\mathcal{S} (x)}) \}.
\end{split}
\end{equation*}
\end{cor}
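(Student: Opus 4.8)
The plan is to exploit the way $\mathcal{S}$ fibers over the binary variable $x$. Keeping the earlier notation $J(\hat x) = \{0\}\cup\{j:\hat x_j = 1\}$, one can write
\[
\mathcal{S} = \bigcup_{\hat x \in \{0,1\}^n} \big(\{\hat x\}\times \mathcal{S}(\hat x)\big),
\]
a finite union of bounded polytopes (bounded because each $f_j$ lives in a compact interval). Hence $\mathcal{S}$ is compact and $\conv(\mathcal{S})$ is a polytope. Using the standard fact that the extreme points of the convex hull of a compact set lie in the set itself, we get $\extr(\conv(\mathcal{S})) \subseteq \mathcal{S}$, so every extreme point already has the form $(\hat x, \hat f)$ with $\hat x \in \{0,1\}^n$ and $\hat f \in \mathcal{S}(\hat x)$. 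This reduces the corollary to matching the extreme points of $\conv(\mathcal{S})$ with the fiberwise extreme sets $\extr(\mathcal{S}(\hat x))$, which are described explicitly by Proposition~\ref{prop:extrDCSubstructGivenx}.

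For the inclusion $\subseteq$, I would take $(\hat x,\hat f)\in\extr(\conv(\mathcal{S}))$, so by the above $\hat x$ is binary and $\hat f\in\mathcal{S}(\hat x)$, and argue that $\hat f$ must be extreme in the fiber. If it were not, then $\hat f = \tfrac12(f^1+f^2)$ for distinct $f^1,f^2\in\mathcal{S}(\hat x)$; but then $(\hat x,f^1),(\hat x,f^2)\in\mathcal{S}\subseteq\conv(\mathcal{S})$ average to $(\hat x,\hat f)$, contradicting its extremality. Hence $\hat f\in\extr(\mathcal{S}(\hat x))$.

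For the reverse inclusion $\supseteq$, which I expect to be the crux, I would take $(\hat x,\hat f)$ with $\hat f\in\extr(\mathcal{S}(\hat x))$ and suppose $(\hat x,\hat f)=\sum_i \lambda_i (x^i,f^i)$ is a convex combination of points $(x^i,f^i)\in\mathcal{S}$ with $\lambda_i>0$. The decisive step is integrality: reading the identity coordinatewise in $x$ gives $\hat x_j = \sum_i \lambda_i x^i_j$ with $\hat x_j, x^i_j\in\{0,1\}$, which forces $x^i_j=\hat x_j$ for every $i$ (a vertex of the hypercube is extreme in the hypercube, so it admits only trivial convex representations by $0/1$ points). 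Therefore $x^i=\hat x$ for all $i$, whence $f^i\in\mathcal{S}(\hat x)$ and $\hat f=\sum_i\lambda_i f^i$; extremality of $\hat f$ in $\mathcal{S}(\hat x)$ then forces $f^i=\hat f$ for all $i$, so the representation is trivial and $(\hat x,\hat f)\in\extr(\conv(\mathcal{S}))$.

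The main obstacle is exactly this reverse direction: in general a vertex of one constituent polytope of a union need not survive as a vertex of the convex hull, because it might be reconstructed by mixing points drawn from other fibers. What eliminates this possibility here is the binary structure of $x$, which collapses any candidate mixture back into a single fiber, after which the problem is purely the fiberwise extremality already guaranteed by $\hat f\in\extr(\mathcal{S}(\hat x))$. The only care needed in writing this up is the coordinatewise $0/1$ argument that rules out all cross-fiber combinations; the remaining steps are immediate.
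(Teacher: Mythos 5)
Your proof is correct and follows essentially the same route as the paper, which disposes of the corollary in one line by noting that extreme points of $\conv(\mathcal{S})$ must have binary $x$ and then invoking Proposition~\ref{prop:extrDCSubstructGivenx}. You simply spell out both inclusions in full --- in particular the reverse inclusion, where the coordinatewise $0/1$ argument collapses any convex representation into a single fiber --- which the paper leaves implicit.
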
 
\begin{proof}
The result follows due to the fact that at any extreme point $(\hat x, \hat f)$ of $\conv(\mathcal{S})$, we must have $\hat x \in \{0,1\}^n$ and using Proposition \ref{prop:extrDCSubstructGivenx}.
\end{proof}

\subsection{An extended formulation for ${\conv(\mathcal{S})}$}
Below, we describe a way to obtain ${\conv(\mathcal{S})}$ through an extended formulation via a special layered network. 
To start with, let us define a set $V_k$, which contains the possible values the variable $f_k$ can take \textcolor{black}{between its bounds} in one of the extreme points of  $\conv(\mathcal{S})$, for $k\in[0:n]$. In particular, we have
\begin{equation*}
\begin{split}
V_k = \bigg\{f_k = d - \sum_{j \in [0:n]\setminus \{k\} }  f_j :   f_j \in E_j \ j\in[0:n] \setminus\{ k\}, \  \underline f_k  \le f_k \le \overline f_k  \bigg\},
\end{split}
\end{equation*}
for $k\in[0:n]$,
where
\[
E_j := \begin{cases}
\{\underline f_0, \overline f_0 \} & \text{ if } j=0 \\
\{-\bar f_j, 0, \bar f_j \} & \text{ if } j \in [1:n].
\end{cases}
\]
We note that each set $V_k$ can be obtained via enumeration.

We now describe a way to obtain ${\conv(\mathcal{S})}$ through an extended formulation via a special  network with ${n+2}$ layers, denoted as $N=(\cup_{i=-1}^{n} K_i, A)$. Here, $K_i$ is the set of nodes in layer $i$ and  $A$  is the set of arcs. 
\textcolor{black}{Layer $-1$ contains the source node and layer $n$ contains the sink nodes}. Each node in the network 
   is designated by a triplet $(i,\delta,S)$, where $i$ is the layer index, $\delta$ is the amount of demand satisfied until layer $i$ and $S$ is  the set of 
 coordinates  selected so far \textcolor{black}{that take value between their respective bounds}. The critical element in this construction is the fact that $|S| \le 1$ due to Proposition \ref{prop:extrDCSubstructGivenx} and Corollary \ref{prop:extrDCSubstruct}.  We use Algorithm \ref{alg:DCnetwork} to construct this network, \textcolor{black}{which we use  to represent the extreme points of ${\conv(\mathcal{S})}$ using source to sink paths.} See Figure \ref{fig:network} for an illustration.

\begin{algorithm}
\caption{Network construction for the extended formulation of $\conv(\cS)$.}
\label{alg:DCnetwork}
\begin{algorithmic}
\STATE Let $K_{-1} = \{(-1,0,\emptyset)\}$, $K_i=\emptyset$, $i=0,\dots,n$, and $A=\emptyset$. 
\FOR {$i=0,\dots,n$}

\STATE Set $\underline B_i=\max\{ \sum_{j=0}^i \underline f_j, d-\sum_{j=i+1}^n \bar f_j\}$
and $\overline B_i =\min\{\sum_{j=0}^i \bar f_j, d -\sum_{j=i+1}^n \underline f_j\} $.

\FOR {\textbf{each} $(i-1, \delta, S) \in K_{i-1}$}

\FOR {\textbf{each} $\vartheta \in E_i$}
\IF { $\underline B_i \le \delta+\vartheta \le \overline B_i$}
\STATE $K_i = K_i \cup \{ (i,\delta+\vartheta, S) \}$ and $A = A \cup \{ (i-1, \delta, S)  \to (i,\delta+\vartheta, S) \}$.
\ENDIF
\ENDFOR

\IF {$S = \emptyset$}
\FOR {\textbf{each} $\vartheta \in V_i$}
\IF { $\underline B_i \le \delta+\vartheta \le \overline B_i$}
\STATE $K_i = K_i \cup \{ (i,\delta+\vartheta, \{i\}) \}$ and $A = A \cup \{ (i-1, \delta, \emptyset)  \to (i,\delta+\vartheta, \{i\}) \}$.
\ENDIF
\ENDFOR

\ENDIF 
\ENDFOR
\ENDFOR

\end{algorithmic}
\end{algorithm}

\def\layersep{4.4cm}
\def\nodesep{1.4}

\begin{figure}[H]
\centering
\begin{tikzpicture}[shorten >=1pt,->,draw=black!50, node distance=\layersep, thick, scale=0.5]
    \tikzstyle{every pin edge}=[<-,shorten <=1pt]
    \tikzstyle{neuron}=[circle,fill=black!75,minimum size=15pt,draw=black,inner sep=0pt]
    \tikzstyle{input neuron}=[neuron, fill=green!0];
    \tikzstyle{output neuron}=[neuron, fill=red!0, minimum size=16pt,draw=white];
    \tikzstyle{hidden neuron}=[neuron, fill=blue!50];
    \tikzstyle{annot} = [text width=4em, text centered]

	\node[input neuron] (source) at (0,0) {\scriptsize{$-1,0,\emptyset$}};

    \foreach \name / \y in {-1,...,1}
        \node[input neuron] (I-\name) at (\layersep, \nodesep*\y) { };
        
       \node[output neuron] (I-0) at (\layersep,0) {{$\vdots$}};

    \foreach \name / \y in {-2,...,2}
        \path[yshift=0.0cm]
            node[input neuron] (H-\name) at (2*\layersep,\nodesep*\y) {};
            
       \node[output neuron] (H-0) at (2*\layersep,0) {{$\vdots$}};

	\node[output neuron] (dots) at (3*\layersep,0) {\Large{$\cdots$}};
            
    \foreach \name / \y in {-2,...,2}
        \path[yshift=0.0cm]
            node[input neuron] (K-\name) at (4*\layersep,\nodesep*\y) {};

       \node[output neuron] (K-0) at (4*\layersep,0) {{$\vdots$}};

    \foreach \name / \y in {-1,...,1}
        \node[input neuron] (L-\name) at (5*\layersep,\nodesep*\y) {};

       \node[output neuron] (L-0) at (5*\layersep,0) {{$\vdots$}};


%


%
    \node[annot,above of=source, node distance=2.1cm] (hl) {\footnotesize \textcolor{black}{Layer -1 (Source)}};
    \node[annot,above of=I-0, node distance=2.1cm] (hl) {\footnotesize Layer 0};
    \node[annot,above of=H-0, node distance=2.1cm] (hl) {\footnotesize Layer 1};
    \node[annot,above of=K-0, node distance=2.1cm] (hl) {\footnotesize Layer $n-1$};
    \node[annot,above of=L-0, node distance=2.1cm] (hl) {\footnotesize \textcolor{black}{Layer $n$ (Sinks)}};
    \node[annot,above of=dots, node distance=2.1cm] (hl) {$\cdots$};
\end{tikzpicture}
\caption{Illustration of the nodes of the layered network.}\label{fig:network}
\end{figure}
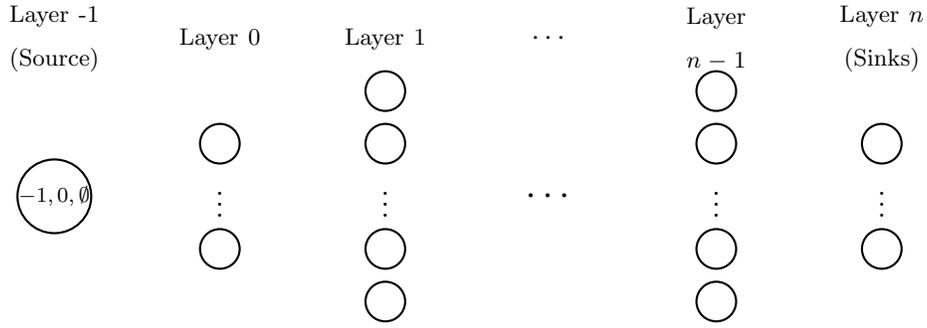


Let us define a set of binary variables $Y_{(i-1,\delta,S),(i,\delta',S')}$ which takes value 1 if the arc $(i-1,\delta,S)\to(i,\delta',S')$ is traversed, and 0 otherwise. Consider the following system:

\begin{subequations}\label{eq:DCnetworkForm}
\begin{align}
&\textcolor{black}{
\sum_{    \eta \in K_0 } Y_{(-1,0,\emptyset), \eta } = 1 
} \label{eq:DCnetworkForm-a} \\
& \textcolor{black}{
\sum_{ \eta\in K_{i-1} } Y_{\eta,\eta'} = \sum_{ \eta''\in K_{i+1} } Y_{\eta', \eta''} } \quad & {i} & \in[1:n-1], \textcolor{black}{ \eta'\in K_i }
 \label{eq:DCnetworkForm-b}  \\
&  \textcolor{black}{
x_i = 1-\sum_{(i-1,\delta,S)\in K_{i-1},(i,\delta,S)\in K_i} Y_{(i-1,\delta,S),(i,\delta,S)}} \quad &i&\in[1:n]  \label{eq:DCnetworkForm-c}  \\
& \textcolor{black}{
f_i = \sum_{(i-1,\delta,S)\in K_{i-1},(i,\delta',S')\in K_i} (\delta'-\delta) Y_{(i-1,\delta,S),(i,\delta',S')}} \quad &i&\in[0:n].  \label{eq:DCnetworkForm-d} 
\end{align}
\end{subequations}

\begin{prop}\label{prop:DCnetworkconv}
Consider the set $\mathcal{S}$ defined in \eqref{eq:DCSubstruct} and the layered network $N=(\cup_{i=-1}^{n} K_i, A)$ constructed using Algorithm \ref{alg:DCnetwork}. Then, we have that
\[
{\conv(\mathcal{S})} = \{ (x,f) \in[0,1]^n \times \mathbb{R}^{n+1} : \exists Y \ge 0 :  \eqref{eq:DCnetworkForm} \}.  
\]
\end{prop}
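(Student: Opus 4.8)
The plan is to recognize the right-hand side as the image of a network-flow polytope under an affine map and to match its vertices with the extreme points of $\conv(\mathcal{S})$ that were characterized in Corollary~\ref{prop:extrDCSubstruct}. Write $Q$ for the set on the right-hand side, and let $\mathcal{F} = \{Y \ge 0 : \eqref{eq:DCnetworkForm-a},\ \eqref{eq:DCnetworkForm-b}\}$ be the polytope of unit source-to-sink flows in the layered network $N$; equations \eqref{eq:DCnetworkForm-c}--\eqref{eq:DCnetworkForm-d} define an affine map $\Pi(Y) = (x,f)$, so that $Q = \Pi(\mathcal{F})$. Since $\Pi$ is affine and $\mathcal{F}$ is a polytope, $Q$ is a polytope whose vertices lie among the images $\Pi(\hat Y)$ of the vertices $\hat Y$ of $\mathcal{F}$. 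Hence it suffices to (i) identify the vertices of $\mathcal{F}$, (ii) show each such image is a feasible point of $\mathcal{S}$, and (iii) prove that these images are exactly $\extr(\conv(\mathcal{S}))$.

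For (i), I would invoke the standard fact that the constraint matrix of \eqref{eq:DCnetworkForm-a}--\eqref{eq:DCnetworkForm-b} is the node-arc incidence matrix of the directed acyclic network $N$, which is totally unimodular; therefore $\mathcal{F}$ is integral and its vertices are precisely the $0/1$ incidence vectors of source-to-sink paths, each selecting exactly one arc out of every layer. Reading a path off layer by layer yields, at each layer $i$, a single value $\vartheta \in E_i \cup V_i$ recorded as $f_i = \vartheta$ through \eqref{eq:DCnetworkForm-d}; telescoping the increments $\delta'-\delta$ along the path gives $\sum_{i=0}^n f_i = \delta_{\text{sink}} - \delta_{\text{source}} = d$, because the bounds $\underline B_n = \overline B_n = d$ force every layer-$n$ node to carry demand exactly $d$. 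By \eqref{eq:DCnetworkForm-c}, $x_i = 0$ exactly when the chosen arc is the $\vartheta=0$ arc drawn from $E_i$ (the only arc with $\delta' = \delta$ and $S' = S$), in which case $f_i = 0$; otherwise $x_i = 1$ and $f_i \in \{-\bar f_i, \bar f_i\}$ (from $E_i$) or $f_i \in [\underline f_i, \bar f_i]$ (from $V_i$). This establishes (ii): each path maps to a point of $\mathcal{S}$.

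For (iii), the key structural feature is that arcs drawn from $V_i$ are available only when $S = \emptyset$ and set $S' = \{i\}$, so along any path at most one coordinate is selected strictly between its bounds; this is exactly the condition $|S|\le 1$ that mirrors Proposition~\ref{prop:extrDCSubstructGivenx}. Thus a path encodes a choice of $\hat x$, a designated free coordinate $k \in J(\hat x)$ (or none), and bound values $f_j \in \{\underline f_j, \bar f_j\}$ for the remaining $j \in J(\hat x)$, with $f_k$ absorbing the residual $d - \sum_{j \ne k} f_j$; conversely, every extreme point of $\conv(\mathcal{S})$ described in Corollary~\ref{prop:extrDCSubstruct} has precisely this form, and the pruning conditions $\underline B_i \le \delta \le \overline B_i$ are satisfied along the corresponding path because the partial sums of an extreme point are both achievable from coordinates $0,\dots,i$ and completable to $d$ by coordinates $i+1,\dots,n$. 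This yields a correspondence between source-to-sink paths and $\extr(\conv(\mathcal{S}))$ under which $\Pi(\text{path})$ is the matching extreme point, giving $\extr(\conv(\mathcal{S})) \subseteq \Pi(\text{vertices of }\mathcal{F})$.

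Combining the three steps, $\Pi(\text{vertices of }\mathcal{F}) \subseteq \mathcal{S} \subseteq \conv(\mathcal{S})$ gives $Q \subseteq \conv(\mathcal{S})$, while $\extr(\conv(\mathcal{S})) \subseteq \Pi(\text{vertices of }\mathcal{F}) \subseteq Q$ together with convexity of $Q$ gives $\conv(\mathcal{S}) \subseteq Q$; hence $Q = \conv(\mathcal{S})$. I expect the main obstacle to be the careful verification in step (iii) that the pruning bounds $\underline B_i,\overline B_i$ neither discard any feasible completion nor admit a partial path that cannot be extended to total demand $d$ --- i.e., that a triple $(i,\delta,S)$ survives Algorithm~\ref{alg:DCnetwork} if and only if $\delta$ is simultaneously realizable and completable --- together with the bookkeeping that disambiguates $\vartheta=0$ as a line that is off ($x_i=0$, via $E_i$) from a free flow of zero on an active line ($x_i=1$, via $V_i$).
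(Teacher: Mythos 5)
Your argument is correct and follows essentially the same route as the paper's (much terser) proof: total unimodularity of the path/flow constraints makes the vertices of the $Y$-polytope the incidence vectors of source-to-sink paths, and the affine map \eqref{eq:DCnetworkForm-c}--\eqref{eq:DCnetworkForm-d} puts these in correspondence with the extreme points of $\conv(\mathcal{S})$ as characterized in Proposition~\ref{prop:extrDCSubstructGivenx} and Corollary~\ref{prop:extrDCSubstruct}. Your elaboration of the pruning bounds $\underline B_i,\overline B_i$ and of the $E_i$-versus-$V_i$ disambiguation of $\vartheta=0$ fills in details the paper leaves implicit, so no changes are needed.
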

\begin{proof}
First, we note that  there is a one-to-one correspondence between  extreme points of $\conv(\mathcal{S})$ and source-to-sink paths in $N$.
Also, the constraint matrix of \eqref{eq:DCnetworkForm} is totally unimodular in $Y$. \textcolor{black}{This is due to the fact that constraints  \eqref{eq:DCnetworkForm-a}-\eqref{eq:DCnetworkForm-a} are defined by a network matrix, which is known to be totally unimodular, and constraints \eqref{eq:DCnetworkForm-c}-\eqref{eq:DCnetworkForm-d} are simply the definitions of the $x$ and $f$ variables in terms of the $Y$ variable.}  Hence, the result follows.
\end{proof}

\subsection{Proof of Theorem~\ref{prop:fixedKappaPolyTime}: A special case when the extended formulation is compact}

In the previous section, we  constructed an extended formulation for ${\conv(\mathcal{S})}$. However, this formulation can be of exponential-size in the worst case as optimizing a linear function over the set $\mathcal{S}$ is NP-Hard due to Proposition \ref{prop:DCSubstructCompl}. Now, we will consider an important sub-family of instances for which the size of the extended formulation is polynomial in $n$ with fixed parameter $\bar \kappa$.

\begin{prop}\label{prop:fixedKappaPolySize}
Under the assumptions of Theorem \ref{prop:fixedKappaPolyTime}, 
there exists a compact extended formulation for  ${\conv(\mathcal{S})}$ whose size is polynomial in $n$ and $\bar \kappa$.
\end{prop}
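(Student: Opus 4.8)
The plan is to read the size of the extended formulation directly off the layered network $N$ produced by Algorithm~\ref{alg:DCnetwork}. By Proposition~\ref{prop:DCnetworkconv}, $\conv(\mathcal{S})$ is exactly the projection of the polyhedron defined by \eqref{eq:DCnetworkForm}, whose number of variables is $n+(n+1)+|A|$ and whose number of constraints is on the order of $n+\sum_{i=-1}^{n}|K_i|$. Hence it suffices to bound both the node count $\sum_i|K_i|$ and the arc count $|A|$ by a polynomial in $n$ and $\bar\kappa$. Since a node is a triplet $(i,\delta,S)$ with $i\in[-1:n]$ and $|S|\le 1$ (so $S$ ranges over $\emptyset$ together with at most $n+1$ singletons), the only quantity whose range is not already manifestly polynomial is $\delta$, so the crux is to bound the number of distinct values of $\delta$ that can appear at a node.

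To bound the $\delta$-values I would exploit the arithmetic structure forced by the hypotheses of Theorem~\ref{prop:fixedKappaPolyTime}. Because $\underline f_0=\overline f_0=0$ and $\bar f_j=\kappa_j\bar f$ with $\kappa_j\in[1:\bar\kappa]$, every boundary value in $E_j$ lies in $\{-\kappa_j,0,\kappa_j\}\bar f$, an integer multiple of $\bar f$; consequently any partial sum of boundary values has the form $m\bar f$ with $|m|\le\sum_j\kappa_j\le n\bar\kappa$. By Proposition~\ref{prop:extrDCSubstructGivenx} and Corollary~\ref{prop:extrDCSubstruct}, at most one coordinate takes a value strictly between its bounds, which is exactly why the algorithm maintains $|S|\le 1$; the value of that single free coordinate $k$ belongs to $V_k$ and, by definition of $V_k$, equals $d$ minus a sum of elements of the $E_j$'s, hence has the form $d-m'\bar f$. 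Combining these observations, the quantity $\delta=\sum_{j=0}^i f_j$ recorded at a node is either $m\bar f$ (when $S=\emptyset$) or $d+m\bar f$ (when $S\neq\emptyset$) for some integer $m$ with $|m|\le n\bar\kappa$. Therefore $\delta$ ranges over at most $2(2n\bar\kappa+1)=O(n\bar\kappa)$ distinct values.

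With this in hand the count is routine. The number of nodes is at most $(n+2)\cdot(n+2)\cdot O(n\bar\kappa)=O(n^3\bar\kappa)$, the three factors bounding the choices of $i$, of $S$, and of $\delta$ respectively. For the arcs, I would observe that each node $(i-1,\delta,S)$ emits at most $|E_i|+|V_i|\le 3+|V_i|$ arcs, and that the admissibility requirement $d-m'\bar f\in[-\kappa_i\bar f,\kappa_i\bar f]$ together with $d\in[0,\bar f)$ confines $m'$ to an interval of length $2\kappa_i\le 2\bar\kappa$, so $|V_i|\le 2\bar\kappa+1=O(\bar\kappa)$. Hence $|A|=O(n^3\bar\kappa^2)$, and \eqref{eq:DCnetworkForm} is an extended formulation of polynomial size, as claimed; this also yields Theorem~\ref{prop:fixedKappaPolyTime}, since one may then optimize any linear function over $\conv(\mathcal{S})$ by solving the resulting polynomial-size linear (network-flow) program.

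The main obstacle, and the only genuinely nonroutine step, is the second paragraph: establishing that every reachable $\delta$ lies on one of the two arithmetic progressions $\bar f\,\mathbb{Z}$ or $d+\bar f\,\mathbb{Z}$ of bounded range. This is precisely where all three structural hypotheses — $f_0$ pinned to $0$, the common base $\bar f$ with integer multipliers, and the at-most-one-free-coordinate property inherited from the extreme-point description — are simultaneously needed; dropping any one of them destroys the collapse of $\delta$ onto $O(n\bar\kappa)$ values and the formulation can again blow up exponentially.
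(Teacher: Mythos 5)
Your proposal is correct and follows essentially the same route as the paper's proof: both bound the size of the layered network from Algorithm~\ref{alg:DCnetwork} by observing that, after normalizing by $\bar f$, every reachable $\delta$ lies on one of the two arithmetic progressions $\bar f\,\mathbb{Z}$ or $d+\bar f\,\mathbb{Z}$ within a window of width $O(n\bar\kappa)$, so that each layer has polynomially many nodes. Your version is in fact slightly more careful than the paper's (you also account for the choices of $S$ and bound the arc count explicitly), but the key idea is identical.
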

\begin{proof}
First of all, we can assume without loss of generality that $\bar f =1$ (otherwise, we can pass to new variables $f_j' = f_j / \bar f$). Consider the extended formulation \textcolor{black}{given in Proposition \ref{prop:DCnetworkconv}} constructed via Algorithm \ref{alg:DCnetwork}. It suffices to show that the number of nodes in the layered network is upper bounded by  a polynomial in $n$ and $\bar \kappa$.

Since $\underline f_j, \overline f_j \in \mathbb{Z}$, we have that $\delta$ is either an integer or a number of the form $p + d$ where $p \in \mathbb{Z}$. Notice also that $n \bar \kappa \leq \underline B_i := \textup{max} \{\sum_{j=0}^i \underline f_j, d - \sum_{j= i + 1}^n \overline f_j\}$, since $d \geq 0 $ and $- \underline f_j = \overline f_j = \kappa_j \leq \bar \kappa$. Similarly, $n \bar \kappa  + 1\geq \overline B_i := \textup{min} \{\sum_{j=0}^i \overline f_j, d - \sum_{j= i + 1}^n \underline f_j\}$. Thus $|K_i| \leq 4n\bar\kappa$. Therefore, $ \sum_{j=-1}^n|K_j| \le 4n^2\kappa$, which is a polynomial in $n$, and $\bar \kappa$ is a fixed integer.
\end{proof}

We are now ready to prove  Theorem \ref{prop:fixedKappaPolyTime}.
\begin{proof}[Proof of Theorem~\ref{prop:fixedKappaPolyTime}]
The statement of the proposition follows from the facts that i) optimizing a linear function over ${\mathcal{S}}$ is equivalent to optimizing the same function over ${\conv(\mathcal{S})}$, ii) there is a compact polyhedral extended formulation for ${\conv(\mathcal{S})}$ due to Proposition \ref{prop:fixedKappaPolySize}, and iii) linear programming is polynomially solvable.
\end{proof}

%
%

\section{An outer-approximation for convex hull of $\mathcal{S}$}\label{sec:app}
Since the size of the extended formulation for ${\conv(\mathcal{S} )}$ can be of exponential-size, we explore the possibility of obtaining a  compact outer-approximation in this section. 

\subsection{A compact extended formulation}
We start with the following proposition.
\begin{prop}\label{prop:outerDCSubstruct}
Consider the set $\mathcal{S}$ defined in \eqref{eq:DCSubstruct}, and define
\begin{equation}\label{eq:outerDCSubstruct}
\mathcal{O} =  \bigcup_{k=0}^n \bigg\{ (x,f) \in[0,1]^n \times \mathbb{R}^{n+1} : \sum_{j=0}^n f_j = d, \ \underline f_j x_j \le f_j \le \bar f_j x_j \ j\in[0:n], \ f_k \in \conv(V_k), \ x_k =1  \bigg\} .
\end{equation}
Then, we have $\conv(\mathcal{S}) \subseteq \conv(\mathcal{O})$.
\end{prop}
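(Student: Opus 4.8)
The plan is to exploit the fact that $\conv(\mathcal{S})$ is a polytope, so it equals the convex hull of its extreme points; since $\conv(\mathcal{O})$ is convex, it then suffices to prove the pointwise containment $\extr(\conv(\mathcal{S})) \subseteq \mathcal{O}$. Indeed, once this is established we obtain $\conv(\mathcal{S}) = \conv\big(\extr(\conv(\mathcal{S}))\big) \subseteq \conv(\mathcal{O})$. This reduction is attractive because Corollary~\ref{prop:extrDCSubstruct} together with Proposition~\ref{prop:extrDCSubstructGivenx} gives an explicit description of every extreme point of $\conv(\mathcal{S})$, so the entire argument collapses to a membership check. I would record at the outset that $\mathcal{S}$ is bounded (each $f_j$ is confined to $[\underline f_j, \bar f_j]$ and $x \in [0,1]^n$ upon passing to the convex hull), which is precisely what justifies treating $\conv(\mathcal{S})$ as a polytope.

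First I would fix an extreme point $(\hat x, \hat f) \in \extr(\conv(\mathcal{S}))$. By Corollary~\ref{prop:extrDCSubstruct} we have $\hat x \in \{0,1\}^n$ and $\hat f \in \extr(\mathcal{S}(\hat x))$, and by Proposition~\ref{prop:extrDCSubstructGivenx} there is a distinguished index $k \in J(\hat x)$ with $\hat f_j \in \{\underline f_j, \bar f_j\}$ for $j \in J(\hat x)\setminus\{k\}$, with $\hat f_j = 0$ for $j \notin J(\hat x)$, and with $\hat f_k = d - \sum_{j \ne k}\hat f_j$ satisfying $\underline f_k \le \hat f_k \le \bar f_k$. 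I would then claim that $(\hat x, \hat f)$ lies in the $k$-th set in the union \eqref{eq:outerDCSubstruct}. The bound constraints $\underline f_j \hat x_j \le \hat f_j \le \bar f_j \hat x_j$ and the equality $\sum_j \hat f_j = d$ hold because they already hold for every point of $\mathcal{S}$, and $\extr(\conv(\mathcal{S})) \subseteq \mathcal{S}$. The constraint $\hat x_k = 1$ holds since $k \in J(\hat x)$ (and is vacuous under the convention $x_0 = 1$ in the case $k = 0$).

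The crux is verifying $\hat f_k \in \conv(V_k)$, and in fact I would establish the stronger statement $\hat f_k \in V_k$. Comparing the two descriptions, the point is that every off-coordinate value taken at the extreme point lies in the corresponding set $E_j$: for $j \in J(\hat x)\setminus\{k\}$ with $j \ge 1$ we have $\hat f_j \in \{-\bar f_j, \bar f_j\} \subseteq E_j$, for $j = 0 \ne k$ we have $\hat f_0 \in \{\underline f_0, \bar f_0\} = E_0$, and for $j \notin J(\hat x)$ we have $\hat f_j = 0 \in E_j$. Hence $\hat f_k = d - \sum_{j \ne k}\hat f_j$ is exactly a value of the defining expression for $V_k$ under a legitimate choice $f_j \in E_j$, and since $\underline f_k \le \hat f_k \le \bar f_k$ it belongs to $V_k \subseteq \conv(V_k)$. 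This places $(\hat x, \hat f)$ in $\mathcal{O}$ and completes the reduction.

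I expect the difficulties to be bookkeeping subtleties rather than genuine obstacles: keeping the identification $\underline f_j = -\bar f_j$ straight so that the bound values $\pm\bar f_j$ are recognized as elements of $E_j$, and handling the $k = 0$ case correctly given that $x_0 = 1$ is a convention rather than a decision variable in $[0,1]^n$. The one conceptual step worth stating carefully is the passage from $\hat f_k \in V_k$ to membership in the defining set of $\mathcal{O}$, where the replacement of $V_k$ by $\conv(V_k)$ is what gives each term of the union a polyhedral description while leaving the containment intact.
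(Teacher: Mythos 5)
Your proof is correct and follows exactly the paper's own argument, which simply observes that the extreme points of $\conv(\mathcal{S})$ lie in $\mathcal{O}$ by construction; you have merely filled in the details of that membership check via Proposition~\ref{prop:extrDCSubstructGivenx} and Corollary~\ref{prop:extrDCSubstruct}. The verification that $\hat f_k \in V_k$ (not just $\conv(V_k)$) and the handling of the $k=0$ convention are both handled properly.
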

\begin{proof}
The result follows since the extreme points of $\conv(\mathcal{S})$ are contained in the set $\mathcal{O}$ by construction.
\end{proof}
We note that the size of the polyhedral representation of $\conv(\mathcal{O})$ is on the order of $O(n^2)$. 

\subsection{A special case when the outer-approximation is exact}
We will now discuss a special case for which i) $\conv(\mathcal{O}) = \conv(\mathcal{S})$ and ii) $\conv(\mathcal{S})$ can be described in the original space.

\begin{lem}\label{lem:intPolytope}
Let $(\bar f,d') \in \mathbb{R}_+\times \mathbb{R}$ such that   $d'=\kappa \bar f$ for some $\kappa \in [-m+1: m]$. Then, the following polytope is integral in $x$:
\[
\bigg\{ (x,f) \in[0,1]^m \times \mathbb{R}^{m} : d' -\bar f \le \sum_{j=1}^m f_j \le d', \ -\bar f x_j \le f_j \le \bar f x_j \ j\in[1:m]  \bigg\}.
\]
\end{lem}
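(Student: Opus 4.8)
The plan is to reduce to a normalized instance and then characterize the vertices of the polytope by a constraint-counting argument that exploits its near-separable structure. First I would scale the flow variables by $\bar f$: setting $g_j = f_j/\bar f$ (the degenerate case $\bar f = 0$, where every $f_j=0$ and $d'=0$, is immediate), the polytope becomes
\[
P = \Big\{ (x,g)\in[0,1]^m\times\mathbb{R}^m : \kappa-1 \le \textstyle\sum_{j=1}^m g_j \le \kappa,\ -x_j \le g_j \le x_j\ \ j\in[1:m] \Big\},
\]
where $\kappa = d'/\bar f \in [-m+1:m]$ is an integer. Since this substitution leaves the $x$ variables untouched, it suffices to show every extreme point of $P$ has $x\in\{0,1\}^m$.

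Next I would invoke the standard fact that at an extreme point of a polytope in $\mathbb{R}^{2m}$ there are $2m$ linearly independent active constraints. The key observation is that the constraints split into $m$ \emph{blocks}, the $j$-th block $\{0\le x_j\le 1,\ -x_j\le g_j\le x_j\}$ involving only $(x_j,g_j)$ and hence living in the coordinate subspace $\mathrm{span}\{e_{x_j},e_{g_j}\}$, together with the two coupling constraints $\sum_j g_j\le\kappa$ and $\sum_j g_j\ge\kappa-1$. The coupling gradients are parallel, so they contribute rank at most $1$, while block gradients for distinct $j$ lie in complementary coordinate subspaces. Letting $r_j\le 2$ be the rank of the active constraints in block $j$, subadditivity gives $2m \le \sum_j r_j + 1$, so $\sum_j (2-r_j)\le 1$ and at most one block is under-determined (and that one has $r_j=1$). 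When $r_j=2$ the block pins $(x_j,g_j)$ to a vertex of the triangle $\{x_j\in[0,1],\,|g_j|\le x_j\}$, namely one of $(0,0),(1,1),(1,-1)$, forcing $x_j\in\{0,1\}$ and $g_j\in\{-1,0,1\}$; thus all but at most one coordinate is already integral.

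It remains to handle the single under-determined block $k$ (the $r_k=1$ case), which I expect to be the crux. I would first rule out that the lone active block-constraint is $x_k\ge 0$, since $x_k=0$ forces $g_k=0$ and hence two independent active constraints in block $k$; and if the lone active constraint is $x_k\le 1$, then $x_k=1$ is already integral. In the remaining cases the active block-constraint is $g_k=x_k$ or $g_k=-x_k$, and since $r_k<2$ one coupling constraint must be active to reach total rank $2m$, i.e.\ $\sum_j g_j = c$ with $c\in\{\kappa-1,\kappa\}\subseteq\mathbb{Z}$. Because $g_j\in\{-1,0,1\}$ for every $j\neq k$, this yields $g_k = c - \sum_{j\neq k} g_j\in\mathbb{Z}$, whence $x_k = |g_k|\in\mathbb{Z}\cap[0,1]=\{0,1\}$. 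This is precisely where the integrality of $\kappa=d'/\bar f$ enters, and it is the only place the hypothesis is needed; the range $[-m+1:m]$ merely guarantees that both coupling constraints are meaningful (otherwise $P$ is empty or one bound is redundant), so the argument is unaffected by it.
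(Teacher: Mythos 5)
Your proof is correct, and it reaches the conclusion by a genuinely different route than the paper. The paper argues by contradiction with explicit perturbations: it takes a putative extreme point with some $\hat x_i\in(0,1)$, first shows $\hat f_i=\pm\hat x_i$ by perturbing $x_i$ alone, and then splits on whether the coupling constraint $\sum_j \hat f_j\in\{d'-\bar f,d'\}$ is tight, exhibiting in each case a pair of feasible points $(\hat x\pm\epsilon e_i\mp\epsilon e_k,\hat f\pm\epsilon e_i\mp\epsilon e_k)$ or $(\hat x\pm\epsilon e_i,\hat f\pm\epsilon e_i)$ averaging to $(\hat x,\hat f)$; the integrality of $d'$ enters to produce the second non-tight coordinate $k$. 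You instead run the dual, rank-counting version of the same extremality analysis: decompose the constraints into $m$ two-variable blocks plus a rank-one coupling, conclude that at most one block can be under-determined, observe that every determined block sits at a vertex of the triangle $\{0\le x_j\le 1,\ |g_j|\le x_j\}$, and let the integrality of $\kappa$ force the last block. The two arguments rest on the same structural facts (a fractional $x_i$ forces $f_i=\pm x_i$; integrality of $d'/\bar f$ kills the surviving fractional block), but yours yields a full classification of the extreme points rather than only a contradiction, localizes exactly where the hypothesis $d'=\kappa\bar f$ is used and shows the range $[-m+1:m]$ is not, and sidesteps a small feasibility check the paper's Case 1 leaves implicit (when $\hat x_k=1$ the perturbation $x_k+\epsilon$ leaves $[0,1]$, so one must perturb $f_k$ without $x_k$ there). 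The one step a reader will want spelled out in your write-up is the claim that $r_j=2$ pins $(x_j,g_j)$ to one of $(0,0),(1,1),(1,-1)$: this holds because the only pair of block constraints with dependent gradients is $x_j\ge 0$ with $x_j\le 1$, which cannot both be active, so any two active, linearly independent block constraints meet in a single feasible point, necessarily a vertex of the triangle.
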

\begin{proof}
First of all, we can assume without loss of generality that $\bar f_j =1$, $j=1,\dots,m$ (otherwise, we can pass to new variables $f_j' = f_j / \bar f$). Suppose that there is a fractional extreme point of the polytope, denoted by $(\hat x, \hat f)$, with $\hat x_i\in(0,1)$ for some $i$. We will look at two cases:

\noindent \underline{Case 1}. Suppose  $ \sum_{j=1}^m \hat f_j = d'$ holds (the case with $ \sum_{j=1}^m \hat f_j = d'-1$ can be handled similarly). First note that the point $(\hat x, \hat f)$ with $-\hat x_i < \hat f_i < \hat x_i$ cannot be extreme,
since it can be written as a convex combination of points with $\hat x_i \pm \epsilon$. Therefore, we must have that $\hat f_i = \pm \hat x_i$. Let us assume that $\hat f_i = \hat x_i$ (the case with $\hat f_i=-\hat x_i$ is similar). Since $d'$ is integer, there must exist another index $k$ such that $\hat f_k \in (-1,1)$. Now, we construct two new points $(x^\pm, f^\pm) = (\hat x \pm \epsilon e_i \mp \epsilon e_k, \hat f \pm \epsilon e_i \mp \epsilon e_k)$ with $\epsilon > 0$ small enough such that $(x^\pm, f^\pm)$ belong to the polytope (here, $e_i$ is the $i-$th unit vector). However, this contradicts to the fact that  $(\hat x, \hat f)$ is an extreme point. 

\noindent \underline{Case 2}. Suppose  $d' - \bar f <  \sum_{j=1}^m \hat f_j < d'$ holds. As before, we must have that $\hat f_i = \pm \hat x_i$.   Let us assume that $\hat f_i = \hat x_i$ (the case with $\hat f_i=-\hat x_i$ is similar).  Now, we construct two new points $(x^\pm, f^\pm) = (\hat x \pm \epsilon e_i ,  \hat f \pm \epsilon e_i )$ with $\epsilon > 0$ small enough such that $(x^\pm, f^\pm)$ belong to the polytope.
However, this contradicts to the fact that  $(\hat x, \hat f)$ is an extreme point.  
\end{proof}

\begin{prop}\label{prop:DCSubstructO=convS}
Consider the set $\mathcal{S}$ defined in \eqref{eq:DCSubstruct} with $\underline f_0=\overline f_0 = 0$, $\bar f_j = \bar f$ for $j\in[1:n]$, \textcolor{black}{$0 \leq$} $d < \bar f$, and the set $\mathcal{O}$ defined in \eqref{eq:outerDCSubstruct}.  Then,   we have $\conv(\mathcal{O}) = \conv(\mathcal{S})$.
\end{prop}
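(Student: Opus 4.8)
The plan is to prove the reverse inclusion $\conv(\mathcal{O}) \subseteq \conv(\mathcal{S})$, which together with Proposition~\ref{prop:outerDCSubstruct} gives the claimed equality. Write $\mathcal{O} = \bigcup_{k=0}^{n} \mathcal{O}_k$, where $\mathcal{O}_k$ denotes the $k$-th set in the union \eqref{eq:outerDCSubstruct}. Each $\mathcal{O}_k$ is a bounded polyhedron, so $\conv(\mathcal{O}) = \conv\big(\bigcup_{k=0}^{n}\extr(\mathcal{O}_k)\big)$, and it therefore suffices to show $\extr(\mathcal{O}_k)\subseteq \mathcal{S}$ for every $k$. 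The reduction that makes this work is the observation that any extreme point $(\hat x,\hat f)$ of $\mathcal{O}_k$ with $\hat x\in\{0,1\}^n$ automatically satisfies every defining constraint of $\mathcal{S}$ (flow balance $\sum_{j} \hat f_j = d$, the bounds $\underline f_j \hat x_j \le \hat f_j \le \bar f_j \hat x_j$, and integrality of $\hat x$), hence lies in $\mathcal{S}$. Thus the entire proof reduces to showing that each $\mathcal{O}_k$ is integral in the $x$-variables, and this is precisely where Lemma~\ref{lem:intPolytope} is applied.

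First I would compute $V_k$ explicitly. Using $f_0=0$ (forced by $\underline f_0=\bar f_0=0$) and $E_j=\{-\bar f,0,\bar f\}$, the quantity $\sum_{j\neq k} f_j$ ranges over integer multiples of $\bar f$, so $f_k = d - m\bar f$ subject to $-\bar f \le f_k \le \bar f$. For $d\in(0,\bar f)$ the only feasible values are $m\in\{0,1\}$, giving $V_k=\{d,\,d-\bar f\}$ and $\conv(V_k)=[d-\bar f,\,d]$ for $k\in[1:n]$, while $V_0=\emptyset$ (as $d$ is not a multiple of $\bar f$), so $\mathcal{O}_0=\emptyset$. I would then eliminate $f_k$ via the equality $\sum_{j=0}^{n} f_j=d$, i.e. substitute $f_k = d-\sum_{j\neq k} f_j$. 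This is an affine bijection between $\mathcal{O}_k$ and its projection onto the coordinates $(x_j,f_j)_{j\neq k}$ (with the constants $x_k=1$, $f_0=0$ appended), so extreme points correspond to extreme points. Under this substitution the constraint $f_k\in\conv(V_k)=[d-\bar f,\,d]$ becomes exactly $0\le \sum_{j\neq k} f_j \le \bar f$, and the surviving constraints are the bounds $-\bar f x_j \le f_j \le \bar f x_j$ together with $x_j\in[0,1]$. This projected polytope is verbatim the one in Lemma~\ref{lem:intPolytope} with $m=n-1$ and $d'=\bar f = 1\cdot\bar f$ (so $\kappa=1$), hence it is integral in $x$. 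Consequently every extreme point of $\mathcal{O}_k$ has $x\in\{0,1\}^n$, which settles the generic case $d\in(0,\bar f)$.

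The main obstacle is the boundary case $d=0$, which must be treated separately: there $\conv(V_k)$ is the larger interval $[-\bar f,\bar f]$ (so the naive elimination yields a range of width $2\bar f$, no longer matching the hypothesis $d'=\kappa\bar f$ of Lemma~\ref{lem:intPolytope}), and moreover $\mathcal{O}_0$ is now nonempty with $\conv(V_0)=\{0\}$. The clean way around this is a face argument. Consider the polytope
\[
P = \Big\{ (x,f)\in[0,1]^n\times\mathbb{R}^n : -\bar f \le \textstyle\sum_{j=1}^{n} f_j \le 0,\ -\bar f x_j \le f_j \le \bar f x_j,\ j\in[1:n] \Big\},
\]
which by Lemma~\ref{lem:intPolytope} (now with $d'=0$, i.e. $\kappa=0$) is integral in $x$. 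For $d=0$, every $\mathcal{O}_k$ equals the intersection of $P$ with the supporting hyperplane $\{\sum_{j=1}^{n} f_j = 0\}$ and, for $k\ge 1$, additionally with the face $\{x_k=1\}$; since an intersection of faces of $P$ is again a face of $P$, and faces of an integral polytope are integral, each $\mathcal{O}_k$ is integral in $x$. This closes the $d=0$ case and completes the argument. I expect the bookkeeping for $V_k$ and the verification that the eliminated constraint matches Lemma~\ref{lem:intPolytope} exactly to be routine; the only genuinely delicate point is recognizing that $d=0$ escapes the direct reduction and must be handled through the face/integrality argument above.
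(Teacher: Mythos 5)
Your proof is correct and follows essentially the same route as the paper: each disjunct $\mathcal{O}_k$ is reduced, via the flow-balance equation, to the polytope of Lemma~\ref{lem:intPolytope} with $d'=\bar f$ and $\kappa=1$, and integrality of $\conv(\mathcal{O})$ then follows because it is the convex hull of a union of $x$-integral polytopes. The one place you go beyond the paper is the boundary case $d=0$, where you correctly note that $V_k=\{-\bar f,0,\bar f\}$ rather than $\{d,\,d-\bar f\}$ as the paper's proof asserts, so the constraint $f_k\in\conv(V_k)$ no longer matches the cited polytope; your face argument (or, equivalently, splitting $f_k\in[-\bar f,\bar f]$ into the two integral pieces $[-\bar f,0]$ and $[0,\bar f]$) is a genuine and correct patch of that small gap.
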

\begin{proof}
Since $\underline f_0=\overline f_0 = 0$, $\bar f_j = \bar f$ for $j\in[1:n]$,  we have that $V_k = \{ d, -\bar f +  d\}$ for $k\in[1:n]$.
Now, in order to prove that $\conv(\mathcal{O}) = \conv(\mathcal{S})$, it suffices to show that  $\conv(\mathcal{O})$  is integral in $x$. We first note that the following polytope in integral in $x$ for $\hat f_k \in V_k$ as a consequence of Lemma~\ref{lem:intPolytope}:
\[
\bigg\{ (x,f) \in[0,1]^n \times \mathbb{R}^{n+1} : \sum_{j=1}^n f_j = d, \ -\bar f x_j \le f_j \le \bar f x_j \ j\in[1:n], \ f_k \in [-\bar f +d,d], \ x_k =1  \bigg\}.
\]
Since $\conv(\mathcal{O})$ is the convex hull of the disjunctive union of integral polytopes (see Equation \eqref{eq:outerDCSubstruct}), it follows that $\conv(\mathcal{O})$  is integral as well, and the result follows.
\end{proof}

Note that the conditions under \textcolor{black}{which} the outer approximation $\textup{conv}(O)$ is equal to $\textup{conv}(\mathcal{S})$ \textcolor{black}{are} a special case of conditions presented in Theorem~\ref{prop:fixedKappaPolyTime} when we can show that the problem can be solved in polynomial-time. 

\subsection{The description of $\textup{conv}(\mathcal{S})$ in the original space under the special case}
In the setting of the previous subsection, it is also possible to obtain the convex hull description in the original set of variables. Let us define the following inequalities:
\begin{equation}\label{eq:fEqualIneqDesc}
\begin{split}
\sum_{j \in J_1} (\bar f - d) (\bar f x_j+f_j )  + \sum_{j \in J_2}  (\bar f - d) d x_j + \sum_{j \in J_3} d (\bar f x_j-f_j) &   \ge (\bar f - d) d  , 
\\ &  (J_1,J_2,J_3) \text{ is a partition of } [1:n].
\end{split}
\end{equation}

\begin{thm}\label{prop:fEqualIneqFacet}
Under the assumptions of Proposition~\ref{prop:DCSubstructO=convS}, the inequalities \eqref{eq:fEqualIneqDesc} are valid  for $\conv(\cS)$.
\end{thm}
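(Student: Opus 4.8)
The plan is to prove validity directly on the mixed-integer set $\mathcal{S}$ itself: since \eqref{eq:fEqualIneqDesc} is a single linear inequality and $\mathcal{S}$ is bounded, it is valid for $\conv(\mathcal{S})$ if and only if it holds at every $(x,f)\in\mathcal{S}$ (equivalently, at every extreme point described in Corollary~\ref{prop:extrDCSubstruct}). So I would fix an arbitrary $(x,f)\in\mathcal{S}$ and verify the inequality, normalizing $\bar f=1$ via the scaling $f_j\mapsto f_j/\bar f$ as in the earlier proofs. Under the hypotheses of Proposition~\ref{prop:DCSubstructO=convS} one has $f_0=0$, hence $\sum_{j=1}^n f_j=d$; moreover $x_j=0$ forces $f_j=0$, while $x_j=1$ gives $f_j\in[-\bar f,\bar f]$. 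I would write $P=\{j\in[1:n]:x_j=1\}$ for the active index set and set $p_1=|P\cap J_1|$, $p_3=|P\cap J_3|$.

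First I would record that every summand on the left-hand side is nonnegative: when $x_j=0$ the summand vanishes, and when $x_j=1$ nonnegativity follows from $\bar f+f_j\ge 0$, $d\ge 0$, and $\bar f-f_j\ge 0$ (using $\bar f-d>0$). This immediately disposes of the case $P\cap J_2\neq\emptyset$, since then the $J_2$-block alone contributes at least $(\bar f-d)d$ while the remaining blocks are nonnegative.

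The crux is the case $P\cap J_2=\emptyset$, so $P\subseteq J_1\cup J_3$. Here I would set $u=\sum_{j\in P\cap J_1}(\bar f+f_j)\ge 0$ and $v=\sum_{j\in P\cap J_3}(\bar f-f_j)\ge 0$, so that the left-hand side equals $L=(\bar f-d)u+dv$. The key identity is $u-v=\bar f(p_1-p_3)+\sum_{j\in P}f_j=\bar f(p_1-p_3)+d$, which uses $\sum_{j\in P}f_j=d$ precisely because no active line lies in $J_2$. Substituting $u=v+\bar f(p_1-p_3)+d$ yields $L=\bar f v+(\bar f-d)\big(\bar f(p_1-p_3)+d\big)$, whereas substituting for $v$ yields $L=\bar f u+d\big(\bar f(p_3-p_1)-d\big)$. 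Dropping the nonnegative term $\bar f v$ (respectively $\bar f u$) gives two lower bounds on $L$.

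Finally I would close using integrality of $p_1-p_3$. If $p_1\ge p_3$, the first bound gives $L\ge(\bar f-d)\big(\bar f(p_1-p_3)+d\big)\ge(\bar f-d)d$; if $p_3\ge p_1+1$, then $\bar f(p_3-p_1)\ge\bar f>d$, so the second bound gives $L\ge d\big(\bar f(p_3-p_1)-d\big)\ge d(\bar f-d)=(\bar f-d)d$. Since $p_1,p_3$ are integers, exactly one of these two situations holds, completing the verification. The main obstacle is exactly this last step: a continuous relaxation would allow $0<p_3-p_1<1$, for which neither bound reaches $(\bar f-d)d$, so it is the integrality of the counts together with the hypothesis $d<\bar f$ that makes the inequality tight and valid.
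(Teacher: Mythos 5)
Your proof is correct and follows essentially the same route as the paper's: validity is checked pointwise on $\mathcal{S}$, the case of an active line in $J_2$ is dispatched by nonnegativity of every summand, and the remaining case splits on whether $p_1\ge p_3$ or $p_3>p_1$, exactly mirroring the paper's Cases 1 and 2. The only cosmetic difference is that you close with the identity $u-v=\bar f(p_1-p_3)+d$ and drop a nonnegative term, whereas the paper explicitly minimizes $\sum_{j\in J_1'}f_j$ over the flow polytope; both computations give the same bound.
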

\begin{proof}
We first prove the validity of inequalities \eqref{eq:fEqualIneqDesc} by checking them at each binary vector $\hat x \in \{0,1\}^n$. 
Let us define $J_i':=\{j: \ \hat x_j = 1,\textcolor{black}{j\in J_i}\}$, $i=1,2,3$. If $J_2' \neq \emptyset$, i.e. there exists $j^*\in J_2'$ such that $x_{j^*}=1$, then inequality  \eqref{eq:fEqualIneqDesc}  is trivially valid since the expressions $\bar f x_j+f_j$ and $\bar f x_j-f_j$ are nonnegative by the bound constraints. 
Now, suppose that $ J_2' = \emptyset$. In this case, it suffices to show that 
\begin{equation*}
\begin{split}
\sum_{j\in J_1'\cup J_3'} f_j = d, \ -\bar f \le f_j \le \bar f, \, j\in J_1'\cup J_3'
\implies & \sum_{j \in J_1'} (\bar f - d) (\bar f +f_j )  +  \sum_{j \in J_3'} d (\bar f -f_j) \ge  (\bar f - d) d  \\ 
\iff & \sum_{j \in J_1'}  f_j \ge  d  - (\bar f - d) |J_1'| - d |J_3'|  ,
\end{split}
\end{equation*}
where the  equivalence follows due to the linear equality. We will proceed by showing that 
\begin{equation}\label{eq:proofLP}
\min_{f} \bigg \{ \sum_{j\in J_1' } f_j : \sum_{j\in J_1'\cup J_3'} f_j = d, \ -\bar f \le f_j \le \bar f, \, j\in J_1'\cup J_3'  \bigg \} \ge 
d  - (\bar f - d) |J_1'| - d |J_3'|,
\end{equation}
by examining two cases.
\\
\underline{Case 1}. $|J_1'| \ge |J_3'|$: In this case, any optimal solution of \textcolor{black}{the optimization problem in}  \eqref{eq:proofLP} satisfies $\sum_{j\in J_3' } f_j^* = |J_3'| \bar f$ and  $\sum_{j\in J_1' } f_j^* = d - |J_3'| \bar f$. Hence, we obtain that
\[
\sum_{j\in J_1' } f_j^* = d - |J_3'| \bar f \ge  d - |J_3'| \bar f - (\bar f - d) (|J_1'| - |J_3'|)  = d  - (\bar f - d) |J_1'| - d |J_3'|.
\]
\underline{Case 2}. $|J_1'| < |J_3'|$: In this case, any optimal solution of \textcolor{black}{the optimization problem in} \eqref{eq:proofLP} satisfies $\sum_{j\in J_1' } f_j^* = -|J_1'| \bar f$ and  $\sum_{j\in J_3' } f_j^* = d + |J_1'| \bar f$. Hence, we obtain that
\[
\sum_{j\in J_1' } f_j^* = - |J_1'| \bar f \ge  -|J_1'| \bar f - d (|J_3'| - |J_1'| - 1)  = d  - (\bar f - d) |J_1'| - d |J_3'|.
\]


\end{proof}


\begin{thm}\label{prop:fEqualIneqDesc}
Let us define 
$$X:= \bigg\{ (x,f) \in[0,1]^n \times \mathbb{R}^{n+1} : \eqref{eq:fEqualIneqDesc}, \, f_0=0, \, \sum_{j=1}^n f_j = d, \ -\bar f_j x_j \le f_j \le \bar f_j x_j \ j\in[1:n] \bigg\}.$$ 
Under the assumptions of Proposition \ref{prop:DCSubstructO=convS}, we have $\conv(\cS) = X$.
\end{thm}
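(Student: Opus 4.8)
The plan is to prove the two inclusions $\conv(\cS)\subseteq X$ and $X\subseteq\conv(\cS)$ separately. The first is immediate: the inequalities \eqref{eq:fEqualIneqDesc} are valid for $\conv(\cS)$ by Theorem~\ref{prop:fEqualIneqFacet}, and the remaining constraints defining $X$ are satisfied by every point of $\cS$. Indeed $f_0=0$ holds because $\underline f_0=\overline f_0=0$ forces $f_0=0$; the flow balance $\sum_{j=1}^n f_j=d$ then follows from $\sum_{j=0}^n f_j=d$; the bounds $-\bar f_j x_j\le f_j\le\bar f_j x_j$ and $x\in[0,1]^n$ hold trivially. Since $X$ is convex and contains $\cS$, it contains $\conv(\cS)$.

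For the reverse inclusion I would show that $X$ is integral in $x$, i.e.\ every vertex $(\hat x,\hat f)$ of $X$ satisfies $\hat x\in\{0,1\}^n$. Granting this, fix such a vertex. Restricting the description of $X$ to $x=\hat x$ yields exactly $f_0=0$, $\sum_{j=1}^n f_j=d$, and $-\bar f_j\hat x_j\le f_j\le \bar f_j\hat x_j$, which are the defining constraints of $\cS(\hat x)$; the inequalities \eqref{eq:fEqualIneqDesc} remove nothing further from this slice because they are valid for all of $\cS$, hence for every $f\in\cS(\hat x)$. Thus $\hat f\in\cS(\hat x)$ and $(\hat x,\hat f)\in\cS\subseteq\conv(\cS)$. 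As $X$ is a polytope, $X=\conv(\extr(X))\subseteq\conv(\cS)$, which finishes the argument.

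It remains to establish integrality, the heart of the proof. Following the perturbation technique of Lemma~\ref{lem:intPolytope}, I would first scale so that $\bar f=1$ and dispose of $d=0$ (where \eqref{eq:fEqualIneqDesc} is trivial) separately, so assume $d\in(0,1)$. The key simplification is that the exponential family \eqref{eq:fEqualIneqDesc} is equivalent to the single inequality
\[
\sum_{j=1}^n \min\big\{(\bar f-d)(\bar f x_j+f_j),\ (\bar f-d)d\,x_j,\ d(\bar f x_j-f_j)\big\}\ \ge\ (\bar f-d)d,
\]
obtained by assigning each index $j$ to the part $J_1,J_2,J_3$ giving the smallest contribution. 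This induces a \emph{regime decomposition} of the coordinates at $(\hat x,\hat f)$: comparing the three terms shows that $j$ lies in the $J_1$-, $J_2$-, or $J_3$-regime according to whether $\hat f_j/\hat x_j$ is at most $d-\bar f$, lies between $d-\bar f$ and $d$, or is at least $d$. Supposing $\hat x_i\in(0,1)$ for some $i$, I would then construct a nonzero direction $(u,v)$ with $v_0=0$ and $\sum_{j\ge1}v_j=0$ such that $(\hat x\pm\delta u,\hat f\pm\delta v)\in X$ for small $\delta>0$, contradicting that $(\hat x,\hat f)$ is a vertex. The direction must keep every active bound tight ($v_j=u_j$ when $f_j=\bar f x_j$, and $v_j=-u_j$ when $f_j=-\bar f x_j$) and, crucially, keep the active inequality above tight.

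The main obstacle is precisely this last requirement: unlike the setting of Lemma~\ref{lem:intPolytope}, here $\sum_{j=1}^n f_j=d$ is an \emph{equality} with $d/\bar f\in(0,1)$ non-integral, so at a fractional vertex the min-inequality is genuinely binding and its active partition couples the $x$- and $f$-perturbations. I expect to resolve this by a two-coordinate exchange: using the regimes of $i$ and of a second coordinate $k$ to choose the signs of $(u_i,v_i)$ and $(u_k,v_k)$ so that the directional derivative of the active term vanishes while flow balance and all active bounds are preserved, mirroring the two cases of Lemma~\ref{lem:intPolytope}. Should the direct perturbation become unwieldy, an alternative is to bypass integrality altogether: since $\conv(\mathcal{O})=\conv(\cS)$ by Proposition~\ref{prop:DCSubstructO=convS} and $\mathcal{O}$ from \eqref{eq:outerDCSubstruct} is a union of polytopes in the original space, one can instead show $X\subseteq\conv(\mathcal{O})$ by writing an arbitrary $(x^*,f^*)\in X$ as an explicit convex combination of points of the $\conv(V_k)$-disjuncts, using the regime decomposition to select the weights.
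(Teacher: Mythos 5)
Your overall architecture is sound and is genuinely different from the paper's: you reduce the theorem to showing that $X$ has no vertex with fractional $x$, whereas the paper instead proves that $\dim(X)=\dim(\conv(\cS))=2n-1$ and then, for every objective $c$, runs a long case analysis (on the sign of $z:=\min_i(c^x_i+c^f_i)+\min_j(c^x_j-c^f_j)$) showing that every extreme optimal solution of $\min\{c^\top(x,f):(x,f)\in\conv(\cS)\}$ lies on one of the inequalities defining $X$ at equality; combined with $\conv(\cS)\subseteq X$ this forces $X=\conv(\cS)$. Your forward inclusion and the reduction ``integrality of $X$ $\Rightarrow$ $X\subseteq\conv(\cS)$'' are both correct as written.

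However, there is a genuine gap: the integrality of $X$, which you yourself identify as ``the heart of the proof,'' is never actually established. You describe a plan (scale to $\bar f=1$, aggregate \eqref{eq:fEqualIneqDesc} into a single min-inequality, decompose coordinates into regimes, and find a perturbation direction $(u,v)$), but the decisive step --- constructing a nonzero direction that simultaneously preserves the flow equality, all active bound constraints, \emph{and} all active inequalities from the family \eqref{eq:fEqualIneqDesc} --- is only announced (``I expect to resolve this by a two-coordinate exchange''), not carried out. The difficulty is real: at a candidate fractional vertex, whenever two of the three quantities $(\bar f-d)(\bar f x_j+f_j)$, $(\bar f-d)d\,x_j$, $d(\bar f x_j-f_j)$ coincide for some $j$, exponentially many partitions $(J_1,J_2,J_3)$ attain the minimum and all of the corresponding inequalities can be tight at once, so the perturbation must lie in the intersection of all their hyperplanes; nothing in your sketch shows this intersection (together with the tight bounds and the flow equality) still leaves a nonzero feasible direction. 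Lemma~\ref{lem:intPolytope} does not transfer, since there the only coupling constraints are the bounds and a \emph{ranged} flow constraint, while here the binding min-inequality couples the $x$- and $f$-perturbations in a way that depends on the regime of every coordinate. Your fallback (showing $X\subseteq\conv(\mathcal{O})$ by an explicit convex combination) is likewise only a one-sentence suggestion. Until one of these routes is executed in full, the reverse inclusion $X\subseteq\conv(\cS)$ is not proved.
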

\begin{proof}


We first claim that $\textup{dim}(X) = \textup{dim}(\textup{conv}(\cS))=2n-1$.
In fact, let us consider the set of points 
\begin{align*}
    T= \bigg\{ (x,f): (f_j,x_j)=
    \begin{cases}
    (d,1), &j=i\\
    (0,0), &j\not =i
    \end{cases} , j\in [1:n] \bigg \}
    \cup 
     \bigg \{ (x,f): (f_j,x_j)=
    \begin{cases}
    (d,1), &j=i\\
    (1,1), &j=(i+1)\mod n\\
    (-1,1), &j=(i+2)\mod n
    \end{cases} \bigg \}.
\end{align*}
Since $|T|=2n$, the points of $T$ are affinely independent and contained in $\cS$ and $X$, we have that $\textup{dim}(\conv(\cS))\geq 2n-1$ and $\textup{dim}(X)\geq 2n-1$. As the description of $\cS$ and $X$ contains an equation, $\textup{dim}(\cS)\leq 2n-1$ and $\textup{dim}(X)\leq 2n-1$. Hence, we conclude that $\textup{dim}(X) = \textup{dim}(\textup{conv}(\cS))=2n-1$. 

Our proof strategy is to show that for any $c=[c^x, c^f]\in\mathbb{R}^{2n}$, \emph{all extreme optimal solutions} of the following problem 
\begin{equation}\label{eq:Mc}
    M(c) :=\argmin \bigg \{\sum_{j=1}^n (c^x_j x_j + c^f_j f_j) : (x,f) \in\textup{conv}(\cS) \bigg\},
\end{equation}
satisfy some inequality defining $X$ at equality. One of the key results we repeatedly use in the proof is the characterization of the extreme points of $\conv(S)$ established in Proposition~\ref{prop:extrDCSubstructGivenx} and Corollary~\ref{prop:extrDCSubstruct}, which implies the following: 
%
In an extreme solution, we have that $x_k \in \{0, 1\}$ for all $k \in [n]$, and there exists $ i\in [1:n]$ such that $ f_i\in \{d,-(1-d)\}$, $x_i=1$ and $f_j=\pm x_j$ for all $j\in [1:n]\setminus\{i\}$.


We start the proof with the following simple fact.
\begin{fact}\label{fact:fact c<0}
Suppose that the cost vector $[c^x,c^f]$ has an index $j\in[1:n]$ such that $c_j^x<0$. In this case, all optimal solutions satisfy $x_j=1$.
\end{fact}
Due to Fact~\ref{fact:fact c<0}, we  focus on the non-trivial case in which $c^x_j \geq 0$ for all $j \in [1:n]$ in the remainder of the proof.
Now, let us define 
\[
\mathcal{I}:=\argmin(c^x_i+c^f_i), \ \mathcal{J}:=\argmin(c^x_j-c^f_j), \ \text{and} \ z:=c^x_i+c^f_i+c_j^x-c^f_j, i\in \mathcal{I},j\in \mathcal{J}.
\] 
We have three cases based on the sign of $z$.

\noindent\underline{Case 1}. {$z> 0$}: \\
\underline{Claim 1} Consider any extreme point $(x,f)\in \conv(\cS)$ satisfying that there exist $i,j\in [1:n]$ such that $f_i = 1, f_j = -1$. We claim that such an extreme point is not optimal when $z>0$: Indeed, let $x^*_k=x_k,k\not\in \{i,j\}$ and $f^*_k=f_k,k\not\in \{i,j\}$; and $f_k=x_k=0, k\in \{i,j\}$. Then $(x^*,f^*)$ is feasible, and  observe that $$\sum_{i= 1}^n (c^x_i x^*_i + c^f_i f^*_i) + z \leq   \sum_{i = 1}^n (c^x_i x_i + c^f_i f_i) \Rightarrow  \sum_{i= 1}^n (c^x_i x^*_i + c^f_i f^*_i) < \sum_{i = 1}^n(c^x_i x_i + c^f_i f_i),$$ thus proving that $(x,f)$ is not an optimal solution.

Let $A=\argmin(c^x_i+dc^f_i), B = \argmin(c^x_i+c^f_i) = \mathcal{I}, C=\argmin(c^x_i-(1-d)c^f_i)$; and let $\tilde a, \tilde b, \tilde c$ be their respective values.
 Based on the above claim, it is easy to verify that all extreme points of $M(c)$ satisfy either: 
    
\begin{itemize}
    \item[(i)] Exactly one arc $i$ has 
    $x_i=1$, and $i\in A$. 
    \item[(ii)] Exactly two arcs $i,j$ exist such that their corresponding $x$ is positive, and $i\in B,j\in C$.
        
\end{itemize}
    
Now, we consider three sub-cases:

\begin{itemize}
    \item If $\tilde a<\tilde b+\tilde c$, then  we are in situation (i) above, i.e., exactly one arc has $x_i = 1$ and $i$ is in set $A$. Thus in any optimal solution  $(x,f)\in M(c)$ we have $\sum_i x_i=1$. 

    Therefore, we can assign $J_2=[1:n]$; the associated constraint from \eqref{eq:fEqualIneqDesc} 
    is satisfied at equality for all points in $M(c)$.
    
    \item If $\tilde b+\tilde c<\tilde a$. We first verify that in this case $B \cap C = \emptyset$. Assume by contradiction that there exists $i \in B \cap C$. Observe then that $$c^x_i+dc^f_i\geq \tilde a > \tilde b + \tilde c = c^x_i+c^f_i+c^x_i-(1-d)c^f_i\implies c^x_i<0,$$ a contradiction to our assumption that $c^x_i \geq 0$. Thus $B\cap C=\emptyset$. Now it is clear that the optimal solution is of the form (ii) above, i.e., exactly two arcs are active, one each from sets $B$ and $C$.  

    Assign $J_3=B,J_2=[1:n] \setminus B$, and it is straightforward to verify that the corresponding constraint from \eqref{eq:fEqualIneqDesc} is satisfied at equality for all points in $M(c)$.
    
    \item Otherwise, $\tilde a=\tilde b+\tilde c$. 

    \begin{itemize}
    
        \item If $B \cap C=\emptyset$, then we can assign $J_3=B, J_2=[1:n]\setminus B$.  If we have a solution of type (i) above with $x_i = 1$ and $f_i =d $ for some $i \in A$, note that whether $i \in J_2$ or $i\in J_3$, the inequality \eqref{eq:fEqualIneqDesc} is satisfied at equality. If we have a solution of type (ii) with $x_i, x_j  = 1$, $f_i = 1$ and $f_j = d - 1$ with $i \in B$ and $j \in C$, then we have that $i \in J_3$ and $j \in J_2$ and the inequality \eqref{eq:fEqualIneqDesc} is satisfied at equality.
            
        \item If $B \cap C \neq \emptyset$. In this case we first verify that $|B| = 1$. Let $i \in B \cap C$. Then 
                \begin{eqnarray}\label{eq:tocontradict}
        c^x_i+c^f_i+c^x_i-(1-d)c^f_i=2c^x_i+dc^f_i= \tilde b+\tilde c=\tilde a\leq c^x_i+dc^f_i\implies c^x_i=0.
        \end{eqnarray} This implies $\tilde b = c^x_i + c^f_i = c^f_i$. Now assume there exists $j \in B$ where $j \neq i$. Then 
        $0 < z \leq c^x_j + c^f_j + c^x_i - c^f_i = \tilde b + c^x_i  - \tilde b= c^x_i,$
        which contradicts (\ref{eq:tocontradict}).

        Again we can assign $J_3=B, J_2=[1:n]\setminus B$.  If we have a solution of type (i) above with $x_i = 1$ and $f_i =d $ for some $i \in A$, note that whether $i \in J_2$ or $i\in J_3$, the inequality \eqref{eq:fEqualIneqDesc} is satisfied at equality. If $|C| =1$, then we cannot have a solution of type (ii), since from the above claim we have $B= C$ with $|B| = |C| = 1$ (and therefore  any solution of type (ii) will have an objective function value strictly greater than $\tilde b + \tilde c$). If $|C| > 1$, then we have a solution of type (ii) with $x_i, x_j  = 1$, $f_i = 1$ and $f_j = d - 1$ with $i \in B$ and therefore crucially
        $j \in C\setminus B$. Then we have that $i \in J_3$ and $j \in J_2$ and thus the inequality \eqref{eq:fEqualIneqDesc} is satisfied at equality.
        
    \end{itemize}
\end{itemize}

\noindent\underline{Case 2}. \textbf{$z<0$}: \\
\underline{Claim 2} We first claim that, if $z<0$, then $\mathcal{I} \cap \mathcal{J} = \emptyset $. If not, then suppose $i \in \mathcal{I} \cap \mathcal{J}$. Then $$0 > z = c^x_i + c^f_i + c^x_i - c^f_i = 2c^x_i,$$
    which contradicts the assumption that $c^x_i \geq 0$. \\\\
We will consider two subcases: either $|\mathcal{J}| < |\mathcal{I}|$ or $|\mathcal{J}|\geq |\mathcal{I}|$.
\begin{itemize}

    \item Suppose $|\mathcal{J}| < |\mathcal{I}|$.  We will prove that $x_j = 1$ for all $j \in \mathcal{J}$ for any $(x,f) \in M(c)$. First observe that $\forall (x,f)\in M(c), \forall i\in \mathcal{I}, j\in \mathcal{J}, x_i+x_j\geq 1$ (otherwise, set both to one for a feasible solution with $f_i =1$ and $f_j = -1$ which has a strictly smaller objective value). Now suppose there exists $(x,f)\in M(c)$ such that  there exists $j\in \mathcal{J}$ with $x_j=0$ and $x_i=1$ for all $i\in \mathcal{I}$. Also let $\tilde{J}\subseteq \mathcal{J}$ such that $x_j = 1$ for $j \in \tilde{J}$. Then observe:
        
    \begin{itemize}
        
        \item There does not exist $i \in \mathcal{I}$ such that $f_i = -1$: If there exists $i \in \mathcal{I}$ such that $f_i = -1$, we obtain a better solution by setting $x_i = 0, f_i =0$ and $x_j = 1$ and $f_j = -1$. Thus, we cannot have $f_i = - 1$ for any $i \in \mathcal{I}$ for $(x,y) \in M(c)$.
            
        \item There does not  exist $i \in \mathcal{I}$ such that $f_i = d$: If there exists $i \in \mathcal{I}$ such that $f_i = d$ then (due to previous case) $f_u = 1$ for all $u \in \mathcal{I} \setminus \{i\}$. Since $|\tilde{J}| \leq |\mathcal{J}| - 1$, we have $|\tilde{J}| \leq |\mathcal{I}\setminus \{i\}| -1$. Thus, there exists $k\in [1:n]\setminus(\mathcal{I} \cup \mathcal{J})$ such that $ f_k= -1,x_k=1$, and $c^x_k-c^f_k>c^x_j - c^f_j$, contradicting optimality.
            
        \item There does not  exist $i \in \mathcal{I}$ such that $f_i = d -1$: Suppose there exists $i \in \mathcal{I}$ such that $f_i = d -1$. Based on previous case, $f_u = 1$ for all $u \in \mathcal{I} \setminus \{i\}$. Consider a new solution where we set $x_i = 1$, $f_i = d$, $x_j = 1$ $f_j = -1$ and all other variables have the same value. Then observe that the change in the objective function value is:
        $$c^x_i + dc^f_i + c^x_j - c^f_j - (c^x_i +  (d - 1)c^f_i) = c^x_j - c^f_j + c^f_i \leq c^x_j - c^f_j + c^x_i + c^f_i = z < 0.$$
        Thus, there does not  exist $i \in I$ such that $f_i = d -1$.
            
    \end{itemize}
    
From the above we have that for all $i \in \mathcal{I}$ we have $f_i = 1$. Now since $|\tilde{J}| \leq |\mathcal{J}| - 1$, we have $|\tilde{J}| \leq |\mathcal{I}| -2$. Thus, there exists $ k\in [1:n]\setminus(\mathcal{I} \cup \mathcal{J})$ such that $ f_k= -1,x_k=1$, and $c^x_k-c^f_k>c^x_j - c^f_j$, contradicting optimality. Thus $x_j = 1$ for all $j \in \mathcal{J}$.
    
Since $(x,f) \in M(c)$ we have $x_j = 1$ for all $j \in \mathcal{J}$. Thus select some $j \in \mathcal{J}$ and observe that $x_j \leq 1$ is satisfied at equality for all $(x,f) \in M(c)$. 

    \item Otherwise, $|\mathcal{I}|\leq |\mathcal{J}|$. We will prove that for all $i\in \mathcal{I}, x_i=1$, for all $(x,f)\in M(c)$.  
    Again it is true that $\forall (x,f)\in M(c), \forall i\in \mathcal{I}, j\in \mathcal{J}, x_i+x_j\geq 1$. 

Assume there exists an index $i \in \mathcal{I}$ 
such that $x_i  =0$. 
Then $x_j=1$, for all $j\in \mathcal{J}$. Let $\tilde{I}\subseteq \mathcal{I}$ such that $x_i = 1$ for $i \in \tilde{I}$.

Then observe:
    \begin{itemize}
    
        \item There does not exist $j\in \mathcal{J}$ such that $f_j=1$: Otherwise, we obtain a better solution by setting $x_j,f_j=0$ and $f_i,x_i=1$, contradicting optimality.
        
        \item There does not  exist $j \in \mathcal{J}$ such that $f_j = d -1$: If there exists $j \in \mathcal{J}$ such that $f_j = d$ then (due to previous case) $f_u = -1$ for all $u \in \mathcal{J} \setminus \{j\}$. Since $|\tilde{I}| \leq |\mathcal{I}| - 1$, we have $\tilde{I} \leq |\mathcal{J}\setminus \{j\}|$. Thus, there exists $k\in [1:n]\setminus(\mathcal{I} \cup \mathcal{J})$ such that $ f_k=1,x_k=1$, and $c^x_k+c^f_k>c^x_i+c^f_i$, contradicting optimality. 
        \item There cannot exist $j\in \mathcal{J}$ such that $f_j=d$: Suppose otherwise, for some $j$. Set $x_i=1,f_i=1,f_j=-(1-d)$. Then the objective function change is 
        \begin{align*}
            c^x_i+c^f_i+c^x_j-(1-d)c^f_j-c^x_j-dc^f_j=c^x_i+c^f_i-c^f_j=z-c^x_j<0,
        \end{align*}
        which contradicts optimality.
        
    \end{itemize}
    
    Finally, if for all $j \in J$, $f_j = -1$, then there exists $ k\in [1:n]\setminus(\mathcal{I} \cup \mathcal{J})$ such that $ f_k=1,x_k=1$, and $c^x_k+c^f_k>c^x_i+c^f_i$, contradicting optimality. Thus, for $(x,f) \in M(c)$, we have $x_i = 1$ for all $i \in \mathcal{I}$. 

Since $(x,f) \in M(c)$, we have $x_i = 1$ for all $i \in \mathcal{I}$ when $|\mathcal{J}|\geq |\mathcal{I}|$. 
Thus select some $i \in \mathcal{I}$ and observe that $x_i \leq 1$ is satisfied at equality for all $(x,f) \in M(c)$.
\end{itemize}
    
    
\noindent\underline{Case 3}. {$z=0$}: \\
\underline{Claim 3}\\
We claim that when $z=0$, $i\in \mathcal{I}\cap \mathcal{J}$ if and only if $c^x_i=0$. If there exists  $i\in \mathcal{I} \cap \mathcal{J}$, then $c^x_i+c^f_i+c^x_i-c^f_i=z =0$ implies $c^x_i=0$. If for some $i \in \mathcal{I} \cup \mathcal{J}$, $c^x_i=0$, then $c^f_i + c^x_i = c^f_i - c^x_i$, so $i\in \mathcal{I}\cap \mathcal{J}$.

\begin{itemize}

    \item Suppose $\mathcal{I}\cap \mathcal{J} =\emptyset$. This implies that $c^x_i>0$ for all $i\in \mathcal{I} \cup \mathcal{J}$ by Claim 3. Consider an extreme point $(x,f)$ of $M(c)$. We will show that there exists a partition of $[1:n]$ such that the inequality \eqref{eq:fEqualIneqDesc} is satisfied at equality.
    
    \begin{itemize}
        
        \item Firstly, for all $k\in \mathcal{I}$, we show $f_k\geq 0$:
        \begin{itemize}
            \item We will show that if there exists $j$ such that $f_{j}=-1$, then $j\in \mathcal{J}$. Suppose by contradiction that there exists $j^*\in [1:n] \setminus \mathcal{J}$ such that $f_{j^*}=-1$. Then because the set of possible flows at extreme points is $\{-1,-(1-d),0,d,1\}$ and $\sum_{i=1}^n f_i=d$, there must be some $i$ such that $f_i=1$. Because $J$ minimizes $c^x_j-c^f_j$ and $z=0$, $c^x_i+c^f_i+c^x_{j^*}-c^f_{j^*}>0$, and we could obtain a better solution by setting $x_{i},x_{j^*},f_{i},f_{j^*}=0$, contradicting optimality.
                
            \item We will show that there cannot exist $i\in \mathcal{I}$ such that $f_{i}=-(1-d)$. Suppose by contradiction that there exists $i^*\in \mathcal{I}$ such that $f_{i^*}=-(1-d)$. By flow conservation, there must exist some $j$ such that $f_j=1$; and $c^x_j+c^f_j\geq c^x_{i^*}+c^f_{i^*}$, since $\mathcal{I}$ minimizes that value. Suppose we set $x_j,f_j=0, f_{i^*}=d$. Then the objective value change is
            \begin{align*}
                &c^x_{i^*}+dc^f_{i^*}-(c^x_j+c^f_j+c^x_{i^*}-(1-d)c^f_{i^*})\\
                \leq & c^x_{i^*}+dc^f_{i^*}-(c^x_{i^*}+c^f_{i^*}+c^x_{i^*}-(1-d)c^f_{i^*})=-c^x_{i^*}<0,
            \end{align*}
            which contradicts optimality.
        \end{itemize}
        The two cases above show that $f_k\geq 0$ for all $k\in \mathcal{I}$.
        \item For all $k\in \mathcal{J}$, we show that $f_k\leq 0$:
        \begin{itemize}
            \item  We will show that if there exists $i$ such that $f_{i}=1$, then $i\in \mathcal{I}$. Suppose by contradiction that there exists $i^*\in [1:n] \setminus \mathcal{I}$ such that $f_{i^*}=1$.
            Firstly, we must have $f_i>0$ for all $i\in \mathcal{I}$; consider otherwise:
            \begin{itemize}
                \item If there exists $i\in \mathcal{I}$ such that $f_i=0$, then we could set $x_i=f_i=1, f_{i^*}=x_{i^*}=0$ for a better solution, contradicting optimality.
                \item We have already shown that for all $i\in \mathcal{I}$, $f_i\geq 0$. 
            \end{itemize}
            Then we have at minimum $1+d$ units of flow accounted for between $i^*$ and $\mathcal{I}$; so by flow conservation, there must exist $j\in [1:n]$ such that $f_j=-1$. As before, we can set $x_{j},x_{i^*},f_j,f_{i^*}=0$ and obtain a better solution, contradicting optimality.
            
            \item We will show that there cannot exist $j\in \mathcal{J}$ such that $f_j=d$. Suppose by contradiction that for some $j^*\in \mathcal{J}$, we have $f_{j^*}=d$.
            \begin{itemize}
                \item If there exists $i\in \mathcal{I}$ such that $f_i=0$, we can set $x_i=1,f_i=1,f_{j^*}=-(1-d)$. Then the objective function change is \begin{align*}
                    c^x_i+c^f_i+c^x_{j^*}-(1-d)c^f_{j^*}-c^x_{j^*}-dc^f_{j^*}=c^x_i+c^f_i-c^f_{j^*}=z-c^x_{j^*}<0,
                \end{align*}
                which contradicts optimality.
                
                \item Otherwise, we have $f_i=1$ for all $i\in \mathcal{I}$. Then by flow conservation, there must exist $j$ such that $f_j=-1$; and $c^x_i+c^f_i+c^f_j-c^f_j\geq z=0$. Then we can set $f_i,x_i,f_j,x_j=0$ for a feasible solution that is not worse; and we have already shown that such a solution cannot be optimal in the previous case. Then this solution cannot be optimal, either.
            \end{itemize}
        \end{itemize}
            The two cases above show that $f_k\leq 0$ for all $k\in \mathcal{J}$.
    \end{itemize}
    
    As we have already shown that $f_i=1\implies i\in \mathcal{I}$ and $f_i=-1\implies i\in \mathcal{J}$, then for any $i\in [1:n] \setminus(\mathcal{I}\cup \mathcal{J})$, $-1<f_i<1$.

    Assign $J_1 = \mathcal{J}, J_2=[1:n] \setminus(\mathcal{I}\cup \mathcal{J}),J_3=\mathcal{I}$, and consider the constraint \eqref{eq:fEqualIneqDesc}. Since $\mathcal{I}\subseteq J_3, \mathcal{J} \subseteq J_1$, when there exists $i$ such that $f_i=\pm 1$, the associated terms cancel out to zero. If there exists $i$ such that $f_i=d$, then $i\in J_2\cup J_3$, and equality is satisfied; otherwise, there exists $i$ such that $f_i=-(1-d)$, and $i\in J_1\cup J_2$, again satisfying equality.
    
    \item Otherwise there exists $i^*\in \mathcal{I} \cap \mathcal{J}$, and $c^x_{i^*}=0$. 
        
    We claim that for all $i$ with $c^x_i=0$, we have that $c^f_i=c^f_{i^*}$. For contradiction, suppose otherwise. If $c^f_i<c^f_{i^*}$, then $i^*\in \mathcal{I}$ is contradicted. If $c^f_i>c^f_{i^*}$, then $i^*\in \mathcal{J}$ is contradicted. 
        
    We will consider some cases and show that in each of the cases
    either we can find an inequality describing a facet of $X$ which all extreme points of $M(c)$ lie on, or else $M(c)=\cS$.  Before presenting the cases, we require the following claim. 
    
    \underline{Claim 4}\\
    When $z=0$ and $\mathcal{I} \cap \mathcal{J} \not = \emptyset$, if $c^x_i>0$ for some $i$, then $f_i\in \{-1,0,1\}$. Consider an $i$ such that $c^x_i>0$.  We will show that $f_i\not= d$. Suppose by contradiction that $f_i=d$, and note that $c^x_i+c^f_i\geq c^f_{i^*}$, so $c^x_i+dc^f_i> dc^x_{i^*}$.
            \begin{itemize}
                \item If $f_{i^*}=0$, we can assign $f_i,x_i=0$ and $f_{i^*}=d$ for a strictly better solution, contradicting optimality.
                \item If $f_{i^*}=1$, then by flow conservation there must exist  $j$ such that $f_j=-1$. We can set $f_j,x_j,f_{i^*},x_{i^*}=0$, for an objective change not worse than $z=0$. Then we can set $f_i,x_i=0$ and $f_{i^*}=d$ for a strictly better solution, contradicting the optimality of the unmodified solution.
                \item If $f_{i^*}=-1$, then by flow conservation there must exist  $k$ such that $f_k=1$, and we can set $f_k,x_k,f_{i^*},x_{i^*}=0$, for an objective change not worse than $z=0$. Then we can set $f_i,x_i=0$ and $f_{i^*}=d$ for a strictly better solution, contradicting the optimality of the unmodified solution.
            \end{itemize}
            
    We next show that $f_i\not = -(1-d)$. Suppose by contradiction that $f_i=-(1-d)$, and note that $c^x_i-c^f_i\geq -c^f_{i^*}$, so $c^x_i-(1-d)c^f_i> -(1-d)c^f_{i^*}$.
            \begin{itemize}
                \item If $f_{i^*}=0$, we can assign $f_i,x_i=0$ and $f_{i^*}=-(1-d)$ for a strictly better solution, contradicting optimality.
                \item If $f_{i^*}=-1$, then by flow conservation there must exist $k$ such that $f_k=1$, and we can set $f_k,x_k,f_{i^*},x_{i^*}=0$, for an objective change not worse than $z=0$. Then we can set $f_i,x_i=0$ and $f_{i^*}=-(1-d)$ for a strictly better solution, contradicting the optimality of the unmodified solution.
                \item If $f_{i^*}=1$, we can assign $f_i,x_i=0$ and $f_{i^*}=d$; the resulting objective change is 
                $dc^f_{i^*}-(c^f_{i^*}+c^x_i-(1-d)c^f_i)=-(1-d)c^f_{i^*}-(c^x_i-(1-d)c^f_i)<0$,
                contradicting optimality.
            \end{itemize}
            
            The above two cases prove that if $c^x_i>0$ for some $i$, then $f_i\in \{-1,0,1\}$.
    Now, we will consider some cases that are analyzed using Claim 4.
    \begin{itemize}
        \item If there exists $i\in [1:n] \setminus(\mathcal{I} \cup \mathcal{J})$, then we will show $x_i=0$ for all $(x,f)\in M(c)$:\\
        Suppose there exists such an $i$ and $f_i\not = 0$ for some extreme point of $M(c)$.
        \begin{itemize}
            \item $f_i\in \{-1,0,1\}$ since $c^x_i>0$.
            \item If $f_{i}=-1$, then by flow conservation there must exist $k$ such that $f_k=1$; and $c^x_k+c^f_k+c^x_i-c^f_i>0$ since $i\not \in J$. Then we can set $f_k,x_k,f_i,x_i=0$ to obtain a better solution, contradicting optimality.
            \item If $f_i=1$, and there exists $k$ such that $f_k=-1$, $c^x_i+c^f_i+c^x_k-c^f_k>0$ since $i\not \in \mathcal{I}$. Then we can set $f_k,x_k,f_i,x_i=0$ to obtain a better solution, contradicting optimality. Otherwise, by flow conservation, there exists some $k$ such that $f_k=-(1-d)$, and all other arcs carry no flow. For all $q\not \in \mathcal{I}\cap \mathcal{J}$ we have $c^x_q>0$, and $-c^f_{i^*}\leq c^x_q-c^f_q$, then $-(1-d)c^f_{i^*}\leq (1-d)c^x_q-(1-d)c^f_q<c^x_q-(1-d)c^f_q$ and we must have $k\in \mathcal{I} \cap \mathcal{J}$; else we have $f_{i^*}=0$ and we can swap $i^*$ and $k$ for a better solution, contradicting optimality. Then $c^x_k=0$ and $c^f_k=c^f_{i^*}$. Then we can set $f_i,x_i=0$ and $f_k=d$, and the change in objective is $dc^f_{i^*}-(c^x_i+c^f_i-(1-d)c^f_{i^*})=c^f_{i^*}-c^x_i-c^f_i<0$ since $i\not \in \mathcal{I}$, contradicting optimality.
        \end{itemize}
        Then $f_i=0$ in all optimal solutions. Since $c^x_i>0$, $x_i=0$ in all optimal solutions as well. 
        Thus, $x_i \geq 0$ is satisfied at equality for all $(x,f) \in M(c)$.

        Now suppose no such $i$ exists for the remainder of the proof.
        
        \item If there exists $i\in \mathcal{I}\cup \mathcal{J}$ such that $c^x_i>0$, then we will show that we have either $f_i=x_i$ or $f_i=-x_i$, for all $(x,f)\in M(c)$.
        \begin{itemize}
            \item If $i\in \mathcal{I}$  and $c^x_i>0$, then we will show that $f_i=x_i$. \\
            We will show that $f_i\in \{0,1\}$. Since $c^x_i>0$, we must have $f_i\in \{-1,0,1\}$. Suppose by contradiction that $f_i=-1$. By flow conservation, there must exist $k$ such that $f_k=1$. Since $i\not\in \mathcal{J}$ and $z=0$, we can set $f_i,x_i,f_k,x_k=0$ and obtain a better solution, contradicting optimality.\\
            Then $f_i\in \{0,1\}$ and if $f_i=0$ then $x_i=0$ since $c^x_i>0$, so $f_i=x_i$.
            
            \item If $i\in \mathcal{J}$, then we will show that $f_i=-x_i$. \\
            First, we show that $f_i\in \{-1,0\}$. Since $c^x_i>0$, we must have $f_i\in \{-1,0,1\}$. Then suppose by contradiction that $f_i=1$. 
            
            \begin{itemize}
                \item If $f_{i^*}>0$, then by flow conservation there must exist $k$ such that $f_k=-1$. Then since $i\not\in \mathcal{I}$, we can set $f_i,x_i,f_k,x_k=0$ for a better solution, contradicting optimality.
                \item If $f_{i^*}=0$, then since $i\not \in \mathcal{I}$ and $z=0$, we can set $f_{i^*}=1$ and $x_i,f_i=0$ for a better solution, contradicting optimality.
                \item If $f_{i^*}=-1$, then since $i\not \in \mathcal{I}$ and $z=0$, we can set $f_{i^*},f_i,x_i=0$ for a better solution, contradicting optimality.
                \item If $f_{i^*}=-(1-d)$, then we can set $f_{i^*}=d$ and $f_i,x_i=0$, and the objective value change is $dc^f_{i^*}-(c^x_i+c^f_i-(1-d)c^f_{i^*})=c^f_{i^*}-c^x_i-c^f_i<0$ since $i\not \in \mathcal{I}$, contradicting optimality.
            \end{itemize}
            Then $f_i\in \{0,-1\}$ and if $f_i=0$ then $x_i=0$ since $c^x_i>0$, so $f_i=-x_i$. Suppose no such $i$ exists for the remainder of the proof.
        \end{itemize}

        \item Then otherwise $I\cap J=[1:n]$ and there exists some $\tilde C$ such that $c^x_i=0,c^f_i=\tilde C,\forall i\in [1:n]$. Then due to flow balance, $M(c)=\cS$.
    \end{itemize}
\end{itemize}
\end{proof}

\begin{prop}\label{prop:fEqualIneqDescSep}
Inequalities \eqref{eq:fEqualIneqDesc} can be separated in linear time.
\end{prop}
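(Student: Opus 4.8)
The plan is to exploit the fact that the left-hand side of \eqref{eq:fEqualIneqDesc} is additively separable across the coordinates $j\in[1:n]$, with the contribution of each $j$ determined solely by the block of the partition to which it is assigned. Concretely, given a point $(\hat x,\hat f)$ to be separated, I would define for each $j\in[1:n]$ the three quantities
\[
a_j^1 := (\bar f - d)(\bar f \hat x_j + \hat f_j), \qquad a_j^2 := (\bar f - d)\, d\, \hat x_j, \qquad a_j^3 := d(\bar f \hat x_j - \hat f_j),
\]
corresponding respectively to placing $j$ in $J_1$, $J_2$, or $J_3$. Since the right-hand side $(\bar f - d)d$ is a fixed constant independent of the partition, deciding whether any inequality in the family \eqref{eq:fEqualIneqDesc} is violated is equivalent to computing the partition that \emph{minimizes} the left-hand side and comparing the resulting value against $(\bar f - d)d$.

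Next I would observe that because each coordinate $j$ contributes independently to the sum, the minimization over all partitions $(J_1,J_2,J_3)$ decouples into $n$ independent single-coordinate minimizations. Hence the minimal achievable left-hand side equals $\sum_{j=1}^n \min\{a_j^1,a_j^2,a_j^3\}$, and an optimal partition is recovered by assigning each $j$ to whichever block attains the minimum (breaking ties arbitrarily). This collapses what appears to be a search over the $3^n$ partitions into a coordinatewise greedy computation.

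Finally I would compare this minimum with the right-hand side: the point $(\hat x,\hat f)$ satisfies all inequalities \eqref{eq:fEqualIneqDesc} if and only if $\sum_{j=1}^n \min\{a_j^1,a_j^2,a_j^3\} \ge (\bar f - d)d$, and otherwise the minimizing partition furnishes a violated inequality together with its violation amount. The procedure evaluates three expressions and takes a minimum for each of the $n$ coordinates, accumulates a running sum, and performs a single final comparison, for a total of $O(n)$ arithmetic operations.

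There is no substantive obstacle here; the only point requiring care is the separability claim itself, namely that the contribution of a coordinate $j$ to the left-hand side of \eqref{eq:fEqualIneqDesc} depends only on its own block assignment and not on how the other coordinates are partitioned. This is immediate from the form of \eqref{eq:fEqualIneqDesc}, in which every summand involves a single index $j$, and it is precisely what allows the exponential family of partition-indexed inequalities to be separated in linear time.
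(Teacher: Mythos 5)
Your proposal is correct and follows essentially the same route as the paper: both define the three per-coordinate contributions $(\bar f - d)(\bar f \hat x_j + \hat f_j)$, $(\bar f - d)d\hat x_j$, $d(\bar f \hat x_j - \hat f_j)$, exploit the additive separability of the left-hand side to reduce the search over partitions to a coordinatewise minimum, and compare the resulting sum against $(\bar f - d)d$ in $O(n)$ time.
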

\begin{proof}
 We now show that the separation of inequalities \eqref{eq:fEqualIneqDesc} can be performed in polynomial time. Given a point $(\hat x, \hat f)$, we define $\hat \alpha_j^1 := (\bar f - d) (\bar f \hat x_j+\hat f_j )$, $\hat \alpha_j^2 := (\bar f - d) d \hat x_j $ and $\hat \alpha_j^3 := d (\bar f \hat x_j- \hat f_j) $, $j=1,\dots,n$, and construct the partition $(\hat J_1, \hat J_2, \hat J_3) $ such that
 \[
 \hat J_1 := \{j: \hat \alpha_j^1\le \hat \alpha_j^2,  \, \hat \alpha_j^1\le \hat \alpha_j^3\}, \
 \hat J_2 := \{j: \hat \alpha_j^2 <  \hat \alpha_j^1, \,  \hat \alpha_j^2 \le \hat \alpha_j^3\}, \text{ and }
 \hat J_3 := \{j: \hat \alpha_j^3 <  \hat \alpha_j^1, \,  \hat \alpha_j^3 < \hat \alpha_j^2\}. 
 \]
 If $\sum_{i=1}^3\sum_{j \in \hat J_i} \hat \alpha_j^i < (\bar f - d)d$, then this partition corresponds to a most violated inequality and can be obtained in linear time with respect to $n$.
\end{proof}

 \section{Computational results}\label{sec:com}

In this section, we explain our computational setting and present our computational results that compare various approaches to solve the DC-OTS problem for challenging instances from the literature.

\subsection{Instances}
In our computational experiments, we use the instances presented in~\cite{kocuk2016cycle}. There are five groups of 35 instances each and some statistics are reported in Table~\ref{tab:instance}.
In each instance, there are $|\mathcal{B}|$ many binary variables and $|\mathcal{B}| +|\mathcal{G}| + |\mathcal{L}|$ many continuous variables.

 \begin{table}[H]\small
 \centering
\begin{tabular}{c|ccc}

  Instance &        $|\mathcal{B}|$ &       $|\mathcal{L}|$ &        $|\mathcal{G}|$ \\
\hline
       118Blumsack\_15 &        118 &        186 &         19 \\

       118Blumsack\_9G &        118 &        186 &         19 \\

       118Blumsack\_5p &        118 &        191 &         19 \\

       118Blumsack\_15p &        118 &        191 &         19 \\

       300New\_5 &        300 &        411 &         61 \\
\hline
\end{tabular}  
 \caption{Instance statistics.}
 \label{tab:instance}
 \end{table}
 We refer the reader to \cite{kocuk2016cycle} for the details about these instances. 
 
\subsection{Computational setting}

In our experiments, we use the commercial MILP solver CPLEX 12.8   on a 64-bit personal computer with Intel Core i7 CPU 2.60GHz processor (16 GB RAM).
A time limit of 3600 seconds is given for all settings and the relative optimality gap is set to $0.1\%$.
We compare the following four settings in our computational experiments:
 \begin{itemize}
 \item  Default: Default CPLEX.
 \item  Feasibility Oriented: Default CPLEX except \texttt{MIPEmph=4} and \texttt{DiveType=2}. As we mentioned earlier, finding feasible solution is also challenging and we will see that changing CPLEX to this setting improves its performance.
 \item  Feasibility Oriented setting and cuts separated from the convex hull defined in \eqref{eq:DCnetworkForm} in the support of $x$ and $f$ variables. The cuts are separated using a cut generation linear program. Five rounds of cuts are separated at the root node. 
 \item  Feasibility Oriented setting and inequalities \eqref{eq:fEqualIneqDesc} are separated. Notice that \eqref{eq:fEqualIneqDesc} can always be used by relaxing the upper bounds $\bar f$ in the set $\mathcal{S}$ (corresponding to a node).  \textcolor{black}{Only the nodes without a generator are considered in this setting due to the assumption of Theorem~\ref{prop:fEqualIneqDesc}.} Again five rounds of cuts are separated at the root node.  
 \end{itemize} 

\subsection{Computational results} 

We present our computational  results  in Tables~\ref{tab:default+feas}-\ref{tab:conv}. 
The following abbreviations are used in the tables:
\begin{itemize}
\item OptTime: Time in seconds used by CPLEX to prove optimality (except time for cut separation).
\item \#Nodes: Number of nodes of the branch-and-bound tree
    \item SepTime: Time in seconds needed to separate the cuts.
    \item TotalTime: Total time in seconds to prove optimality.
\end{itemize}

Table~\ref{tab:default+feas} shows results comparing Default CPLEX with Feasibility Oriented CPLEX. Table \ref{tab:facets} shows the performance of Feasibility Oriented CPLEX together with cuts separated from the extended formulation. Table \ref{tab:conv} shows the performance of Feasibility Oriented CPLEX together with \eqref{eq:fEqualIneqDesc} cuts separated.

 \begin{table}[H]\small
 \centering
 \begin{tabular}{rr|rrr|rrr|}

           &            &        \multicolumn{ 3}{c|}{Default} &     \multicolumn{ 3}{c|}{Feasibility Oriented} \\

           &            &   Unsolved &    OptTime &     \#Nodes &   Unsolved &    OptTime &     \#Nodes \\
 \hline
 \multicolumn{1}{c}{\multirow{ 2}{*}{118Blumsack\_15}} &         GA & \multicolumn{1}{c}{\multirow{ 2}{*}{0}} &       5.97 &      19825 & \multicolumn{1}{c}{\multirow{ 2}{*}{0}} &       6.03 &      11236 \\

 &         AA & \multicolumn{ 1}{c}{} &     100.67 &     586191 &  &       7.69 &      18461 \\
 \hline
 \multicolumn{1}{c}{\multirow{ 2}{*}{118Blumsack\_9G}} &         GA & \multicolumn{1}{c}{ \multirow{ 2}{*}{7} }&      32.78 &     180040 & \multicolumn{1}{c}{\multirow{ 2}{*}{5} }&      26.10 &      76669 \\

 &         AA & \multicolumn{ 1}{c}{} &     686.52 &    4943363 &  &     647.66 &    2825909 \\
 \hline
 \multicolumn{1}{c}{\multirow{ 2}{*}{118Blumsack\_5p}} &         GA &\multicolumn{1}{c}{\multirow{ 2}{*}{0}} &       7.33 &      26880 &\multicolumn{1}{c}{ \multirow{ 2}{*}{0}} &       8.06 &      16441 \\

  &         AA & \multicolumn{ 1}{c}{} &      19.96 &     149245 &  &      10.32 &      30145 \\
\hline
 \multicolumn{1}{c}{\multirow{ 2}{*}{118Blumsack\_15p}} &         GA &\multicolumn{1}{c}{ \multirow{ 2}{*}{5}} &     147.36 &     527989 &\multicolumn{1}{c}{ \multirow{ 2}{*}{4} }&      83.56 &     254854 \\

 &         AA & \multicolumn{ 1}{c}{} &     757.16 &    2484548 & &     481.14 &    1169934 \\
 \hline
 \multicolumn{1}{c}{\multirow{ 2}{*}{300New\_5}} &         GA & \multicolumn{1}{c}{\multirow{ 2}{*}{12}} &     192.72 &     234234 & \multicolumn{1}{c}{\multirow{ 2}{*}{6}} &      78.99 &      88550 \\

 &         AA & \multicolumn{ 1}{c}{} &    1284.98 &    1449385 & &     665.87 &     698018 \\
 \hline
 \end{tabular}  
 \caption{DC-OTS Results without cuts (GA: Geometric Average, AA: Arithmetic Average).}
 \label{tab:default+feas}
 \end{table}

 \begin{table}[H]\small
 \centering
 \begin{tabular}{rr|rrrrrr|}

           &            &   Unsolved &    SepTime &      \#Cuts &    OptTime &     \#Nodes &  TotalTime \\
 \hline
 \multicolumn{1}{c}{\multirow{ 2}{*}{118Blumsack\_15}} &         GA & \multicolumn{ 1}{c}{\multirow{ 2}{*}{0}} &       0.55 &        187 &       6.99 &      12048 &       7.71 \\

  &         AA & \multicolumn{ 1}{c}{} &       0.56 &        187 &      29.12 &      76788 &      29.68 \\
\hline
 \multicolumn{1}{c}{\multirow{ 2}{*}{118Blumsack\_9G}}&         GA & \multicolumn{ 1}{c}{\multirow{ 2}{*}{2}} &       0.54 &        185 &      24.76 &      69342 &      26.26 \\

&         AA & \multicolumn{ 1}{c}{} &       0.54 &        185 &     356.82 &    1443889 &     357.36 \\

 \hline
 \multicolumn{1}{c}{\multirow{ 2}{*}{118Blumsack\_5p}} &         GA & \multicolumn{ 1}{c}{{\multirow{ 2}{*}{0}}} &       0.65 &        192 &       8.81 &      16811 &       9.56 \\

  &         AA & \multicolumn{ 1}{c}{{\it }} &       0.65 &        192 &      11.54 &      31664 &      12.19 \\
\hline
 \multicolumn{1}{c}{\multirow{ 2}{*}{118Blumsack\_15p}} &         GA & \multicolumn{ 1}{c}{\multirow{ 2}{*}{1}} &       0.58 &        186 &      41.44 &     119906 &     42.17 \\

 &         AA & \multicolumn{ 1}{c}{} &       0.58 &        186 &     146.05 &     368832 &     146.64 \\
 \hline
 \multicolumn{1}{c}{\multirow{ 2}{*}{300New\_5}}&         GA & \multicolumn{ 1}{c}{{\multirow{ 2}{*}{4}}} &       2.17 &        455 &      91.54 &      89363 &      95.93 \\

  &         AA & \multicolumn{ 1}{c}{{\it }} &       2.18 &        455 &     523.17 &     485283 &     525.35 \\
\hline
 \end{tabular}  
 \caption{DC-OTS Results with Feasibility Oriented setting and cuts separated from the convex hull.}
  \label{tab:facets}
 \end{table}

 \begin{table}[H]\small
 \centering

 \begin{tabular}{rr|rrrrrr|}

           &            &   Unsolved &    SepTime &      \#Cuts &    OptTime &     \#Nodes &  TotalTime \\
\hline
\multicolumn{1}{c}{\multirow{ 2}{*}{118Blumsack\_15}}&         GA & \multicolumn{ 1}{c}{{\multirow{ 2}{*}{0}}}  &       0.02 &        148 &       5.26 &       8250 &       5.28 \\
 
 &         AA & \multicolumn{ 1}{c}{} &       0.02 &        148 &       6.06 &      12511 &       6.08 \\
\hline
\multicolumn{1}{c}{\multirow{ 2}{*}{118Blumsack\_9G}}&         GA & \multicolumn{ 1}{c}{{\multirow{ 2}{*}{2}}}  &       0.01 &        139 &      28.57 &      76579 &      28.62 \\

  &         AA & \multicolumn{ 1}{c}{} &       0.02 &        139 &     422.73 &    1692401 &     422.74 \\
\hline
\multicolumn{1}{c}{\multirow{ 2}{*}{118Blumsack\_15p}}&         GA & \multicolumn{ 1}{c}{{\multirow{ 2}{*}{0}}}  & {0.02} &  {141} & {8.13} & {16297} & {8.15} \\

&         AA & \multicolumn{ 1}{c}{{}} & {0.02} &  {141} & {10.18} & {33747} & {10.20} \\
\hline
\multicolumn{1}{c}{\multirow{ 2}{*}{118Blumsack\_15p}}&         GA & \multicolumn{ 1}{c}{{\multirow{ 2}{*}{2}}}  &       0.02 &        137 &      55.90 &     153063 &      55.94 \\

&         AA & \multicolumn{ 1}{c}{} &       0.02 &        137 &     282.95 &     597637 &     282.97 \\
\hline
 \multicolumn{1}{c}{\multirow{ 2}{*}{300New\_5}}&         GA & \multicolumn{ 1}{c}{{\multirow{ 2}{*}{3}}} & {0.04} &  {152} & {64.99} & {62704} & {65.05} \\

 &         AA & \multicolumn{ 1}{c}{{}} & { 0.04} &  {152} & {379.84} & {402154} & {379.88} \\
\hline
\end{tabular}  

 \caption{DC-OTS Results with Feasibility Oriented setting and inequalities \eqref{eq:fEqualIneqDesc} are  separated.}
   \label{tab:conv}
 \end{table}

We  summarize our observations as below:

\begin{itemize}
    \item Feasibility oriented (12 unsolved) is much better than Default (24 unsolved). All other statistics are improved as well.
    
    \item
    The setting with cuts separated from the convex hull and the setting with inequalities separated from \eqref{eq:fEqualIneqDesc} are better than the setting with feasibility oriented (they both solve 5 more instances). Most of the statistics are improved (especially for the harder instances 118Blumsack\_9G, 118Blumsack\_15p, 300New\_5).

    \item
    The overall performances of the setting with cuts separated from the convex hull and the setting with inequalities separated from \eqref{eq:fEqualIneqDesc} are similar.
    \begin{itemize}
        \item     The latter adds less cuts: This is more advantageous in easier instances 118Blumsack\_15 and 118Blumsack\_5p.
\item     The former adds more cuts: This is more advantageous in harder instances 118Blumsack\_9G and 118Blumsack\_15p.
\item     For 300New\_5 instances, the former adds significantly more cuts, which seem to result in a heavier and slower model (one more unsolved).
    \end{itemize}
    
    \item
\textcolor{black}{
The setting with inequalities separated from \eqref{eq:fEqualIneqDesc} does not improve the percentage gap closed, defined as
\[
100 \times \frac{UB - LP'}{UB - LP},
\]
except for three instances from 118Blumsack\_9G with negligible improvements (here, $UB$ is the best upper bound found, $LP$ is the lower bound obtained from the LP relaxation and $LP'$ is the lower bound obtained from the LP relaxation with the added inequalities). 
The setting with cuts separated from the convex hull is slightly more successful with average percentage gap closed reported as 0.08\%,  0.38\%, 0.08\%, 0.00\% and 0.16\% respectively for the five instance families.  The OTS Problem is known to have dual degeneracy (the cuts separate the current fractional point but there exist other optimal solutions of the LP with the same objective function value) and these results are not very surprising in this respect.
}

\end{itemize}








\section{Conclusion}\label{sec:conc}
We consider a substructure in DC-OTS representing a single bus node and the lines connected to it. We show that optimizing over this substructure is NP-Hard, and we propose an extended formulation for its convex hull and show when this convex hull is of a compact size. We also propose an outer approximation for its convex hull, which is more compact than our extended formulation, and we show that in a special case of the problem, the outer approximation is exactly the convex hull. In this case, we are able to obtain the convex hull in the original space. Computationally, these cuts are able to help solve more instances in lesser time over the commercial solver CPLEX.

\textcolor{black}{We note that while the substructure we studied is motivated from DC-OTS, this set may also be obtained as a substructure in the context of AC-OPF. For example, we can consider a single bus and lines connected to it, together with the real power flowing in these line. It may also be possible to use the results of this study to other network based applications like gas and water networks.}



\bibliographystyle{plain}
\bibliography{references}

\end{document}